\newtheorem{theorem}{Theorem}
\newtheorem{prop}[theorem]{Proposition}
\theoremstyle{definition}
\newtheorem{definition}[theorem]{Definition}
\newtheorem{corollary}[theorem]{Corollary}
\newtheorem{example}[theorem]{Example}
\newtheorem{question}{Question}
\newtheorem{lemma}[theorem]{Lemma}
\newcommand{\defn}[1]{\textbf{#1}}
\begin{document}
\author{Nadav Kohen}
\title{Consecutive Radio Labeling of Hamming Graphs}
\date{}
\maketitle

\begin{abstract}
For a graph $G$, a $k$-radio labeling of $G$ is the assignment of positive integers to the vertices of $G$ such that the closer two vertices are on the graph, the greater the difference in labels is required to be. Specifically, $\vert f(u)-f(v)\vert\geq k + 1 - d(u,v)$ where $f(u)$ is the label on a vertex $u$ in $G$. Here, we consider the case when $G$ is the Cartesian products of complete graphs. Specifically we wish to find optimal labelings that use consecutive integers and determine when this is possible. We build off of a paper by Amanda Niedzialomski and construct a framework for discovering consecutive radio labelings for Hamming Graphs, starting with the smallest unknown graph, $K_3^4$, for which we provide an optimal labeling using our construction.
\end{abstract}

\section{Introduction}
Graph labeling was first introduced by Rosa in 1966 \cite{Rosa}. Since then, numerous types of labeling have been subject to extensive study including vertex coloring, graceful labeling, harmonious labeling, $k$-radio labeling, and more. For a survey of graph labeling, see Gallian \cite{Gallian}.

This paper will continue a portion of Niedzialomski's work \cite{Amanda} on radio labeling Hamming graphs (which have strong connections to coding theory, see \cite{ChangLuZhou} and \cite{Zhou}), paying some attention to graphs of the form $K_n^t$. These graphs are defined via the Cartesian Product.
\begin{definition}\label{cartproddef}
Given two simple connected graphs, $G$ and $H$, define the \defn{Cartesian Product}, $G\square H$, to have the vertex set $V(G)\times V(H)$ and edges such that a vertex $(v_i,u_i)\in G\square H$ is adjacent to $(v_j,u_j)\in G\square H$ if $v_i=v_j$ and $u_i$ is adjacent to $u_j$ in $H$ or if $u_i=u_j$ and $v_i$ is adjacent to $v_j$ in $G$ (See Figure \ref{Fig:cartprod}). Write the Cartesian Product of $t$ copies of a graph, $G$, as $G^t$. A \defn{Hamming graph} is a graph of the form $K_{n_1}^{t_1}\square K_{n_2}^{t_2}\square\cdots\square K_{n_m}^{t_m}$, where $K_i$ is the complete graph on $i$ vertices.
\end{definition}
\begin{figure}[h]
\centering
\includegraphics[scale=.19]{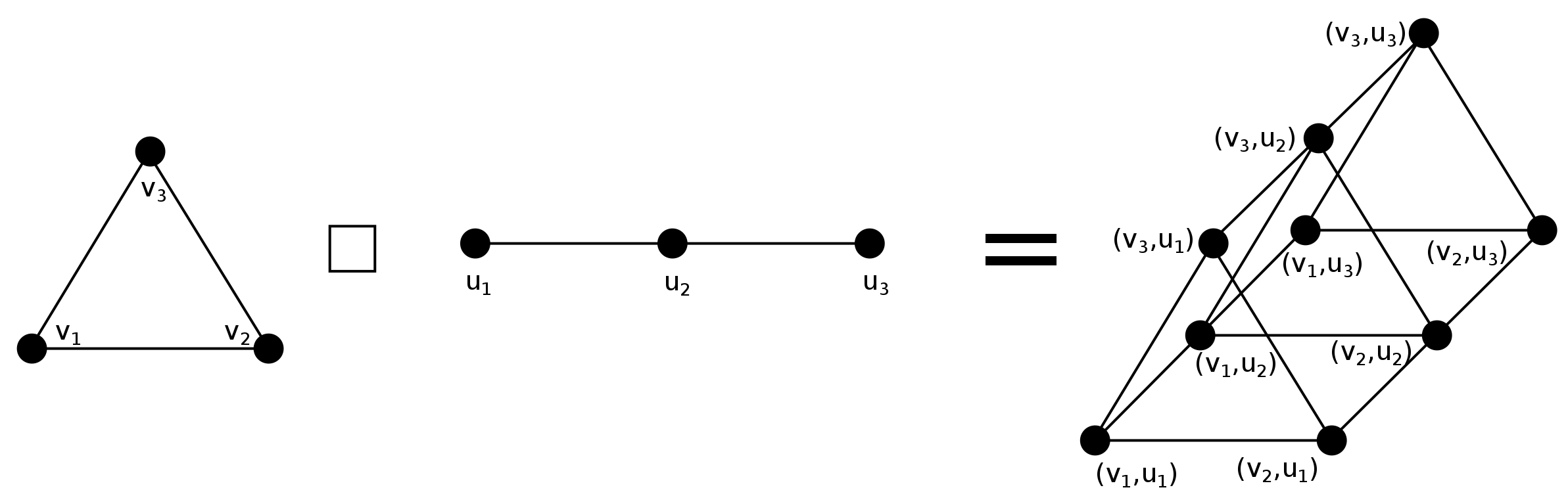}
\caption{The Cartesian Product of $K_3$ and $P_3$ with labeled vertices.}
\label{Fig:cartprod}
\end{figure}

When discussing $G=K_n$, denote vertices with integer subscripts, $V(K_n) = \{(v_i)\vert i\in \mathbb{Z}, 1\leq i\leq n\}$. Denote any vertex $v\in V(K_n^t)$ with the ordered $t$-tuple $(v_{i_1},v_{i_2},\ldots ,v_{i_t})$ where $1\leq i_j\leq n$. When indexing is necessary for elements of $V(K_n^t)$, we will use superscripts, e.g. $v^i,v^j\in V(K_n^t)$. As defined earlier, a vertex $v = (v_{i_1},v_{i_2},\ldots, v_{i_{j-1}},v_{i_j},v_{i_{j+1}},\ldots v_{i_t})$ is adjacent to every $(v_{i_1},v_{i_2},\ldots, v_{i_{j-1}},v_{i_k},v_{i_{j+1}},\ldots v_{i_t})$ where $i_j\neq i_k$. And so, if $v^1 = (v_{i_1},v_{i_2},\ldots ,v_{i_t})$ and $v^2 = (v_{j_1},v_{j_2},\ldots ,v_{j_t})$, then $d(v^1,v^2)$ is exactly the number of $k$ for which $i_k\neq j_k$. This shows that $K_n^t$ has diameter $t$.\\

This paper is primarily concerned with the more general Hamming graphs. Unless otherwise specified, the following conventions will be used throughout: $G = K_{n_1}^{t_1}\square\cdots\square K_{n_m}^{t_m}$ where $n_1 < n_2 < \cdots < n_m$. We let $\overline{t}_k:=\sum_{i=1}^kt_i$, and denote vertices $v\in V(G)$ by ordered $t$-tuples $v=(v_{i_1},\ldots,v_{i_t})$ where if $j$ is such that $\overline{t}_{k-1} < j \leq \overline{t}_k$, then $v_{i_j}\in V(K_{n_k})$. That is to say, the first $t_1$ coordinates are from $V(K_{n_1})$ and the next $t_2$ are from $V(K_{n_2})$ and so on. And just as in the case above, we have that the distance between two vertices is the number of coordinates in which the vertices differ. This also means that $G$ has diameter $\overline{t}_m$, which we will simply call $t$. Lastly, we let $N:=\vert V(G)\vert = \prod_{i=1}^mn_i^{t_i}$.\\

Niedzialomski has shown, by construction, that there exist optimal labelings (see Definition \ref{consecutivelabeling}) for $K_n^t$ where $t\leq n$ (for $n\geq 3$) and there cannot exist such labelings when $t\geq 1+\frac{n(n^2-1)}{6}$. One goal of this paper is to work towards filling the gap where $n<t<1+\frac{n(n^2-1)}{6}$ depicted in Figure \ref{Fig:gap} taken from Niedzialomski's paper.
\begin{figure}[h]
\centering
\includegraphics[scale=.75]{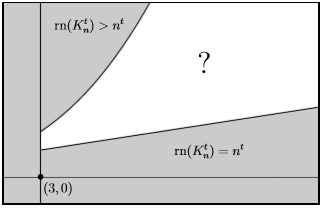}
\caption{Known results where $rn(G)$ is the smallest codomain, $\mathbb{Z}_n$, needed to label $G$.}
\label{Fig:gap}
\end{figure}

Radio labeling comes from the Channel Assignment Problem of assigning frequencies to radio transmitters, where transmitters that are closer together, must have a bigger difference in frequency to avoid interference. This problem was introduced into graph theory by Hale in 1980 \cite{Hale}.
\begin{definition}
A $k$-\defn{radio labeling} of a simple connected graph $G=(V,E)$, is a function $f:V\rightarrow\mathbb{Z}^+$ subject to the constraint
$$\vert f(v) - f(u)\vert\geq k+1-d(v,u)$$
where $v,u\in V$ are distinct and $d(v,u)$ is the distance between $v$ and $u$ in $G$. This inequality is the \defn{radio condition}.
\end{definition}

It is easy to see $k$-radio labeling as a generalization of some more familiar labelings. $1$-radio labeling is equivalent to vertex coloring since the radio condition for $k=1$ simply prohibits neighbors from having the same label. $2$-radio labeling is equivalent to $L(2,1)$-labeling which has also been studied quite heavily and was introduced in \cite{GriggsYeh}. For a survey of $2$-labeling, see \cite{Yeh}.

\begin{definition}
Of particular interest is when $k=$ diam($G$) in which case we simply call $f$ a \textbf{radio labeling}; in this case, $f$ is necessarily injective.
\end{definition}
Radio labeling was originally introduced in \cite{Chartrand}. This paper will only be concerned with consecutive radio labeling.
\begin{definition}\label{consecutivelabeling} If a radio labeling $f$ is a bijection between $V$ and $\{1,2,\ldots, \vert V\vert\}$, we call $f$ a \defn{consecutive radio labeling}. We call any $G$ for which such a labeling exists \defn{radio graceful}.
\end{definition}

Ideally there would be an algorithm for quickly computing whether a general graph is radio graceful, but no efficient algorithm currently exists. Although the exact general computational complexity of this problem is unknown, the computation is too large for general graphs of even moderate size by any known algorithm. This paper will develop a method to more efficiently compute consecutive radio labelings of Hamming graphs, making use of Niedzialomski's radio labelings induced by vertex orderings.
\begin{definition}\label{orddef}
Given a simple connected graph, $G=(V,E)$, an \defn{ordering of $V$} is an ordered list, $O = (v^1,v^2,\ldots, v^{\vert V\vert})$ such that $v^i\neq v^j$ for $i\neq j$.
\end{definition}
Given such an ordering, $O$, one can generate a radio labeling of $G$ by mapping $f(v^1)=1$, and then mapping each vertex in order so that each vertex is sent to the smallest integer possible that still satisfies the radio condition. Put more formally,
\begin{definition}\label{induceddef}
The \defn{radio labeling induced by $O$} is a function, $f:V\rightarrow\mathbb{Z}^+$, such that
$$f(v^1)=1$$
$$f(v^i) = \min\{x\in\mathbb{Z}>f(v^{i-1})\mid (1\leq j<i)\Rightarrow(\vert x-f(v^j)\vert\geq \text{diam}(G) + 1 - d(v^i,v^j))\}$$
\end{definition}

\section{Orderings of Hamming Graphs}
Our goal is to generate orderings for Hamming graphs that induce consecutive radio labelings, or determine when this is not possible. In this section, we will restate the problem of finding consecutive radio labelings as an equivalent problem involving orderings. This will be accomplished via the following,

\begin{prop}\label{rcmatham}
If $G = K_{n_1}^{t_1}\square\cdots\square K_{n_m}^{t_m}$ and $O$ is a list containing $N$ elements from $V(G)$, then $O$ is an ordering of $V(G)$ that induces a consecutive radio labeling if and only if for all $1 < i\leq N$ and all $k<t$, $v^i$ and $v^{i-k}$ share at most $k-1$ coordinates, and $O$ contains no repetition, i.e. $v^i\neq v^j$ for $i\neq j$.
\end{prop}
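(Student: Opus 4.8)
The plan is to reduce everything to one observation: the labeling $f$ induced by $O$ is strictly increasing along $O$, since $f(v^1)=1$ and $f(v^i)>f(v^{i-1})$ for $i>1$. Consequently $f$ is a bijection onto $\{1,\ldots,N\}$ if and only if $f(v^i)=i$ for every $i$. Writing $t=\text{diam}(G)$ (the quantity the introduction already calls $t$), the proposition therefore amounts to characterizing exactly when the induced labeling takes the value $i$ on $v^i$ at each step, and when $O$ has no repetition — and since $O$ has $N=\vert V(G)\vert$ entries, "$O$ has no repetition" is equivalent to "$O$ is an ordering of $V(G)$."

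Next I would record the dictionary between distance and shared coordinates. Because $d(v^i,v^j)$ is the number of coordinates in which $v^i$ and $v^j$ differ and there are $t$ coordinates in total, two vertices sharing exactly $c$ coordinates are at distance $t-c$. Hence, for $j<i$, the radio condition $\vert f(v^i)-f(v^j)\vert\geq t+1-d(v^i,v^j)$ with $f(v^i)=i$ and $f(v^j)=j$ becomes $i-j\geq t+1-(t-c)$, i.e. $c\leq (i-j)-1$; setting $k=i-j$, this is precisely "$v^i$ and $v^{i-k}$ share at most $k-1$ coordinates." I would then note the relevant case split on $k$: when $k\geq t$ this inequality holds automatically as soon as $v^i\neq v^{i-k}$, since distinct vertices share at most $t-1\leq k-1$ coordinates; so the only substantive constraints are those with $1\leq k<t$ (and $i-k\geq 1$), together with the blanket requirement that $O$ be repetition-free.

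For the forward direction, assume $O$ is an ordering inducing a consecutive radio labeling. Then $O$ has no repetition by definition, and $f(v^i)=i$ for all $i$ by the first paragraph; since the induced labeling is always a radio labeling, the radio condition between $v^i$ and $v^{i-k}$ holds, and the dictionary above turns it into the shared-coordinate bound for every relevant $k$, in particular for $k<t$. For the converse, assume $O$ has no repetition (hence is an ordering of $V(G)$, so $f$ is defined) and that the shared-coordinate bound holds for all $1<i\leq N$ and all $1\leq k<t$. I would prove $f(v^i)=i$ by induction on $i$: the base case $f(v^1)=1$ is immediate, and for the step, assuming $f(v^j)=j$ for all $j<i$, I would check that $x=i$ satisfies the radio condition against every earlier $v^j$, splitting on whether $k=i-j$ is $<t$ (apply the hypothesis) or $\geq t$ (apply distinctness of $v^i,v^{i-k}$). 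Since $x=i$ is then admissible and $i$ is the least integer exceeding $f(v^{i-1})=i-1$, the definition of the induced labeling forces $f(v^i)=i$. This makes $f$ a bijection onto $\{1,\ldots,N\}$, i.e. a consecutive radio labeling.

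The argument is largely bookkeeping; the one point I expect to require care is the boundary case $k\geq t$. It is exactly because the shared-coordinate inequality is vacuous there that the hypothesis must separately insist $O$ be repetition-free rather than only constrain shared coordinates for $k<t$ — so I would be careful to state that step explicitly, since it is what justifies the proposition carrying both clauses.
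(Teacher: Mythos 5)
Your proof is correct and follows essentially the same route as the paper's: translate the radio condition into the shared-coordinate bound via $d(v^i,v^j)=t-(\text{shared coordinates})$, observe the forward direction is immediate from $f(v^i)=i$, and for the converse verify that $f(v^i)=i$ satisfies the radio condition by splitting on $k<t$ versus $k\geq t$ (the latter using distinctness). Your explicit induction with the minimality argument for why the induced labeling must actually choose $x=i$ is slightly more careful than the paper's appeal to Definition \ref{induceddef}, but it is the same underlying argument.
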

\begin{proof}
Suppose $O = (v^1,\ldots,v^N)$. It is true by Definition \ref{orddef} that $O$ is an ordering of $V(G)$ if and only if it contains no repetition.\\

It is also quite directly from definition that for any given $i$ and $k$ as in the premise, if $O$ induces a consecutive radio labeling, then $f(v^i) = i$ and $f(v^{i-k}) = i-k$ so that
$$f(v^i) - f(v^{i-k}) = k \geq t + 1 - d(v^i,v^{i-k})\Longleftrightarrow d(v^i,v^{i-k})\geq t- (k-1)$$
which is equivalent to the statement that $v^i$ and $v^{i-k}$ share at most $k-1$ coordinates.\\

Conversely, suppose that for all $i$ and all $k < t$, $d(v^i,v^{i-k})\geq t-(k-1)$. We need to show that the labeling induced by $O$ will be $f(v^i) = i$. It is sufficient to show that such a function $f$ is a valid radio labeling, since then it must be the induced labeling from $O$ due to Definition \ref{induceddef}. It is certainly true that any two vertices listed in $O$ less than $t$ apart will satisfy the radio condition by the previous argument (which is reversible). As for the case when $k\geq t$, then $f(v^i) - f(v^{i-k}) = k \geq t \geq t + 1 - d(v^i,v^{i-k})$  where $i$ is fixed. Therefore, $f(v^i) = i$ satisfies the radio condition as desired.
\end{proof}
\begin{corollary}\label{rcmat}
If $O$ is a list containing $n^t$ elements from $V(K_n^t)$, then $O$ is an ordering of $V(K_n^t)$ that induces a consecutive radio labeling if and only if for all $1 < i\leq n^t$ and all $k<t$, $v^i$ and $v^{i-k}$ share at most $k-1$ coordinates, and $O$ contains no repetition, i.e. $v^i\neq v^j$ for $i\neq j$.
\end{corollary}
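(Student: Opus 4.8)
The plan is to derive Corollary~\ref{rcmat} as the special case of Proposition~\ref{rcmatham} in which $m=1$, $n_1 = n$, and $t_1 = t$. First I would observe that when the Hamming graph $G = K_{n_1}^{t_1}\square\cdots\square K_{n_m}^{t_m}$ is taken with a single factor, the parameter $N = \prod_{i=1}^m n_i^{t_i}$ collapses to $n^t$, and the diameter $t = \overline{t}_m$ collapses to the exponent $t$ in $K_n^t$. With these identifications, the vertex set $V(G)$ is exactly $V(K_n^t)$, a list of $N$ elements from $V(G)$ is exactly a list of $n^t$ elements from $V(K_n^t)$, and the notion of two vertices ``sharing at most $k-1$ coordinates'' is unchanged since all coordinates now come from the same complete graph $K_n$.

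The substantive content is already contained in Proposition~\ref{rcmatham}, so the only step is to check that its hypotheses and conclusion translate verbatim. Concretely, I would note that ``$O$ is an ordering of $V(G)$ that induces a consecutive radio labeling'' becomes ``$O$ is an ordering of $V(K_n^t)$ that induces a consecutive radio labeling,'' and the right-hand side of the biconditional---``for all $1 < i\leq N$ and all $k<t$, $v^i$ and $v^{i-k}$ share at most $k-1$ coordinates, and $O$ contains no repetition''---becomes word-for-word the condition in the corollary with $N$ replaced by $n^t$. Since all definitions involved (Cartesian product, distance as number of differing coordinates, induced radio labeling) specialize correctly, no new argument is needed.

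There is essentially no obstacle here: the corollary is a direct restatement of the proposition with the degenerate choice of a single Hamming factor, and the ``hard part'' is merely the bookkeeping of confirming that each symbol in the general statement has the intended meaning in the one-factor case. Thus the proof is a single sentence: apply Proposition~\ref{rcmatham} with $m = 1$.

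\begin{proof}
This is the special case $m=1$ of Proposition~\ref{rcmatham}, taking $n_1 = n$ and $t_1 = t$, so that $N = n^t$, the diameter is $t$, and $V(G) = V(K_n^t)$.
\end{proof}
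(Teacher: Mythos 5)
Your proof is correct and matches the paper's intent exactly: the corollary is stated without its own proof precisely because it is the $m=1$, $n_1=n$, $t_1=t$ specialization of Proposition~\ref{rcmatham}, with $N=n^t$ and diameter $t$. Nothing further is needed.
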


When $G = K_{n_1}^{t_1}\square\cdots\square K_{n_m}^{t_m}$, we will write any ordering, $O = (v^1,\ldots,v^N)$, as a $N\times t$ matrix where the $i$\textsuperscript{th} row of $O$ is $v^i = (v^i_1,v^i_2,\ldots ,v^i_t)$ where each $v^i_j = v_l\in V(K_{n_k})$ for some $l$ and the appropriate $n_k$. In particular, orderings of $K_n^t$ are $n^t\times t$ matrices containing elements from $V(K_n)$ (See Figure \ref{Fig:K32Ord}). From now on, all orderings will be these matrices.\\

Proposition \ref{rcmatham} allows us to essentially rewrite the radio condition in this new context of $N\times t$ matrices. This transforms our problem from graph labeling to trying to generate a matrix satisfying certain properties; namely, that row $i$ may be identical to row $i-k$ in at most $k-1$ places.\\

However, this transition from graph labeling to matrix generation means that we not only need to mind the radio condition but also avoid repetition as is mentioned in Proposition \ref{rcmatham}. Since our focus will almost always be on the radio condition with no regard for repetition, the following definition is necessary.
\begin{definition}
A \textbf{weak ordering} of $V(K_{n_1}^{t_1}\square\cdots\square K_{n_m}^{t_m})$ is an $N\times t$ matrix where the elements of column $j$ are chosen from $V(K_{n_k})$ when $\overline{t}_{k-1}<j\leq\overline{t}_k$. The set of all orderings is the subset of the set of all weak orderings whose rows have no repetition.
\end{definition}

\begin{figure}[h]
\centering
\includegraphics[scale=.15]{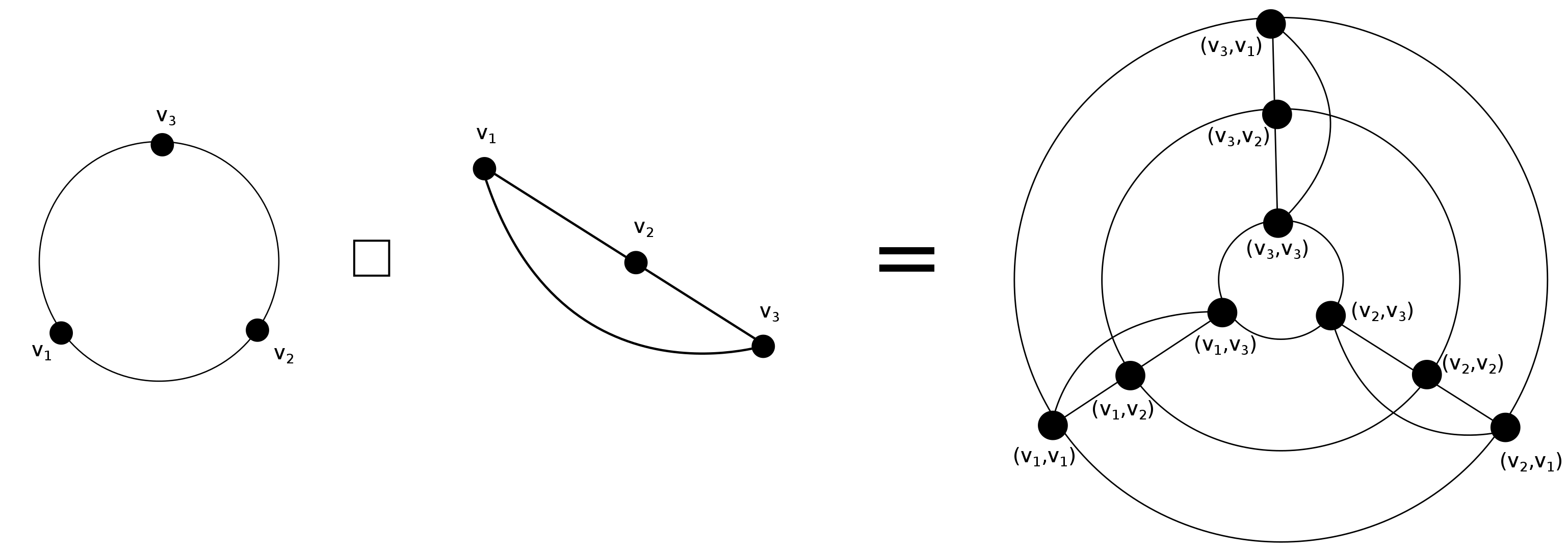}
\includegraphics{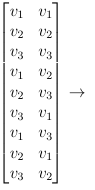}\includegraphics[scale=.15]{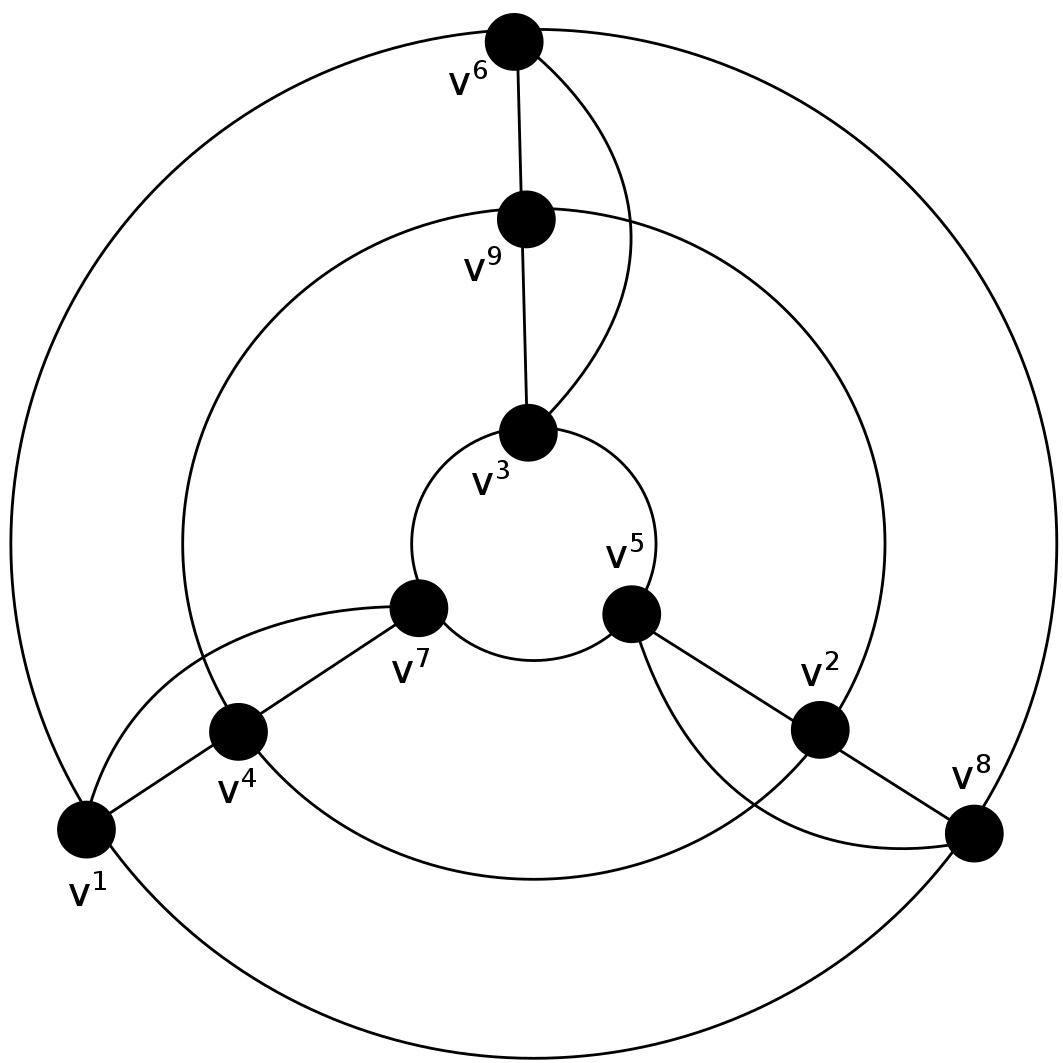}
\caption{The Matrix seen above (bottom) is an ordering of the vertices of $K_3^2$ (top) that induces a consecutive radio labeling.}
\label{Fig:K32Ord}
\end{figure}

Lastly, the following definition and lemma will be helpful anytime we wish to relabel our $v_i$ in orderings.
\begin{definition}\label{permutecoldef}
Define $O_{u,\sigma}$ as follows. Given $O$, an ordering of $V(G)$, let $u = (v_j^i)_{i=1}^{N}$ be a column of $O$, and let $\sigma\in S_{n_k}$ be a permutation where $k$ is such that $u$ has elements from $V(K_{n_k})$. Then replacing $u$ in $O$ with the new column $(v_{\sigma(l_{i,j})})_{i=1}^{N}$ (where $v_j^i = v_{l_{i,j}}$) yields a new ordering, call it $O_{u,\sigma}$.
\end{definition}
\begin{lemma}\label{permutecol}
If $O$ induces a consecutive radio labeling (of $V(K_{n_1}^{t_1}\square\cdots\square K_{n_m}^{t_m})$), $u$ is a column of $O$, and $\sigma\in S_{n_k}$ for the appropriate $k$, then $O_{u,\sigma}$ also induces a consecutive radio labeling.
\end{lemma}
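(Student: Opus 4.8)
The plan is to reduce the whole statement to Proposition \ref{rcmatham}, which characterizes exactly the orderings that induce consecutive radio labelings purely in terms of how many coordinates each relevant pair of rows shares, together with the no-repetition condition. First I would check that $O_{u,\sigma}$ is again a weak ordering: the only column that changes is $u$, and its new entries $v_{\sigma(l_{i,j})}$ still lie in $V(K_{n_k})$ because $\sigma$ merely permutes $\{1,\ldots,n_k\}$, so the column/factor bookkeeping is preserved.

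The crux is the following claim: for any two row indices $p$ and $q$, the rows $v^p$ and $v^q$ of $O_{u,\sigma}$ agree in exactly the same set of coordinates as the rows $v^p$ and $v^q$ of $O$. For every coordinate other than the $j$-th this is immediate, since those columns are untouched. For the $j$-th coordinate, the original rows agree there iff $l_{p,j}=l_{q,j}$, while the new rows agree there iff $\sigma(l_{p,j})=\sigma(l_{q,j})$; since $\sigma$ is a bijection, these two conditions are equivalent. Hence the coordinate-agreement pattern of every pair of rows — and in particular the distance $d(v^p,v^q)$, which is just the number of coordinates in which the two rows differ — is left unchanged in passing from $O$ to $O_{u,\sigma}$.

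Two consequences then finish the proof. First, since $O$ contains no repeated row, and a pair of rows differing in some coordinate still differs in that same coordinate after applying $\sigma$, the matrix $O_{u,\sigma}$ also has no repetition, so it is a genuine ordering of $V(G)$. Second, the condition of Proposition \ref{rcmatham} — that for all $1<i\le N$ and all $k<t$ the rows $v^i$ and $v^{i-k}$ share at most $k-1$ coordinates — depends only on this agreement pattern, so it is inherited by $O_{u,\sigma}$ from $O$. Applying Proposition \ref{rcmatham} in the reverse direction to $O_{u,\sigma}$ then yields that $O_{u,\sigma}$ induces a consecutive radio labeling.

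I do not expect a genuine obstacle here; the only point requiring care is the bookkeeping with the doubly-indexed subscripts $v_{l_{i,j}}$ from Definition \ref{permutecoldef}, namely making sure that ``agreement in column $j$'' really does translate into the equality $l_{p,j}=l_{q,j}$ that $\sigma$ preserves. One could phrase the argument more slickly by observing that applying $\sigma$ in coordinate $j$ is a graph automorphism of $G$ (every permutation of $V(K_{n_k})$ is an automorphism of the complete graph $K_{n_k}$, and these extend coordinatewise to the Cartesian product), but the pairwise-agreement argument above is self-contained and fits the matrix language already established.
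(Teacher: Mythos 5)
Your proposal is correct and follows essentially the same route as the paper: the paper's proof is a one-line appeal to Proposition \ref{rcmatham}, noting that permuting a column does not change the number of coordinates shared between any pair of rows. Your write-up simply makes explicit the bijectivity argument and the preservation of the no-repetition condition that the paper leaves implicit.
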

\begin{proof}
This is a direct result of Proposition \ref{rcmatham}, since permuting the elements of a column as described does not change the number of coordinates shared between any pair of rows.
\end{proof}
One way to view the Cartesian product in this context is to see each coordinate of a vertex, and hence column of an ordering $O$, as corresponding to a copy of $K_{n_l}$ for some $l$, where each vertex (row) in the product is a choice of one vertex from each of these complete graphs of which there are $t$ in total. In light of this, the process of permuting a column as in Definition \ref{permutecoldef} is exactly the same as relabeling the vertices of the copy of $K_{n_k}$ to which this column corresponds. In general, this fact that each column is somewhat independent (as far as Proposition \ref{rcmatham} is concerned) is what allows us to generalize many results from graphs of the form $K_n^t$ to more general Hamming graphs.

\section{Bounds on labeling}
In Niedzialomski's paper \cite{Amanda}, it is shown that for any graph $G$, there is some integer $t$ for which $G^t$ is not radio graceful. Specifically there is a bound for $K_n^t$ stated here as Corollary \ref{upperbound}. I will present an analogous bound for the more general case of Hamming Graphs. In the following proofs of bounds, we will consider orderings, and derive a contradiction if $t$ is too much larger than $n$.\\

I will demonstrate the idea behind the proof method of the bound with the example of $K_3^5$ which I will show to be not radio graceful. (It may be helpful to follow along in Figure \ref{Fig:K35Ord})
\begin{example}
Suppose for the sake of contradiction that there exists an ordering, $O$, of $V(K_3^5)$ which induces a consecutive radio labeling. Consider the segment of $O$ from $v^i$ to $v^{i+3}$ for any appropriate $i$ (i.e. $1\leq i\leq 3^5-3$). Due to Lemma \ref{permutecol}, we can permute each column so that $v_i = (v_1,v_1,v_1,v_1,v_1)$ and $v^{i+1} = (v_2,v_2,v_2,v_2,v_2)$, we can do this since we know that $v^i$ and $v^{i+1}$ share no coordinates. Next, $v^{i+2}$ can share at most one coordinate with $v^i$ and must be different from both $v^i$ and $v^{i+1}$ in all other coordinates. Thus, using Lemma \ref{permutecol} again, we may now permute the last four columns of $O$ so that the last four coordinates of $v^{i+2}$ become $v_3$ (different from both $v_1$ and $v_2$). Lastly, consider $v^{i+3}$; the first coordinate is no longer under our control to permute as it was not fixed in $v^{i+2}$ meaning that we can't ensure that any permutation involving this column wouldn't change our current setup. However, we do know that $v^{i+3}$ may share at most two places with $v^i$, and one more with $v^{i+2}$. However, this leaves still one coordinate which must then be different from all the previous vertices in this segment, and this is impossible since our only choices are $v_1,v_2,$ and $v_3$. Therefore, there does not exist an ordering, $O$, that induces a consecutive radio labeling on $K_3^5$. Additionally, from this construction we can conclude that in any valid ordering for $K_3^4$, $v^i$ must share exactly one coordinate with $v^{i+2}$, exactly two with $v^{i+3}$, and $v^{i+1}$ must also share a coordinate with $v^{i+3}$, otherwise we have the same problem.
\begin{figure}[h]
\begin{center}
$\begin{bmatrix}
v^i\\v^{i+1}\\v^{i+2}\\v^{i+3}
\end{bmatrix} = \left[\begin{array}{ccccccccccc}
v_1&v_1&v_1&v_1&v_1\\
v_2&v_2&v_2&v_2&v_2\\
v_1?&v_3&v_3&v_3&v_3\\
?&v_2?&v_1?&v_1?&\textbf{X}
\end{array}\right]$
\end{center}
\caption{A segment of an ordering of $K_3^5$ showing that there is no possible fourth vertex.}
\label{Fig:K35Ord}
\end{figure}
\end{example}

See Figure \ref{Fig:K411Ord} for a demonstration of this contradiction with $K_4^{11}$ (because $1+\frac{4(16-1)}{6}=11$), although this figure can also be viewed as a demonstration for $K_3^4\square K_4^7$.

\begin{figure}[h]
\begin{center}
$\begin{bmatrix}
v^i\\v^{i+1}\\v^{i+2}\\v^{i+3}\\v^{i+4}
\end{bmatrix} = \left[\begin{array}{ccccccccccc}
v_1&v_1&v_1&v_1&v_1&v_1&v_1&v_1&v_1&v_1&v_1\\
v_2&v_2&v_2&v_2&v_2&v_2&v_2&v_2&v_2&v_2&v_2\\
v_1?&v_3&v_3&v_3&v_3&v_3&v_3&v_3&v_3&v_3&v_3\\
?&v_2?&v_1?&v_1?&v_4&v_4&v_4&v_4&v_4&v_4&v_4\\
?&?&?&?&v_3?&v_2?&v_2?&v_1?&v_1?&v_1?&\textbf{X}
\end{array}\right]$
\end{center}
\caption{A segment of an ordering of $K_4^{11}$ showing that there is no possible fifth vertex.}
\label{Fig:K411Ord}
\end{figure}

The big idea is that because of the radio condition (viewed as in Proposition \ref{rcmatham}), we only have so many columns in which we can differ with previous vertices, so that many coordinates must be different from those above them and eventually this isn't possible because we run out of options (i.e. elements of $V(K_n)$).\\

\begin{definition}\label{alphadef} Given $O$, an ordering of $V(G)$ that induces a consecutive radio labeling, define $\alpha_k^i$ to be the number of columns, $j$, of $O$ for which the collection $\{v_j^i,v_j^{i+1},\ldots,v_j^{i+k}\}$ is made up of pairwise distinct elements. For example, for any $i$, $\alpha_0^i = \alpha_1^i = t$ and $\alpha_2^i$ is either $t$ or $t-1$.
\end{definition}
\begin{prop}\label{alphastep}
For any non-negative integer $k$ and any possible $i$,
$$\alpha_{k+1}^i\geq \alpha_k^i - \sum_{b=1}^kb.$$
\end{prop}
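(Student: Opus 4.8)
The plan is to track, as we move from the window $v^i,\dots,v^{i+k}$ to the window $v^i,\dots,v^{i+k+1}$, exactly how many columns can lose the ``pairwise distinct'' property. A column $j$ counted by $\alpha_k^i$ has the entries $v_j^i,\dots,v_j^{i+k}$ all distinct; when we append the new row $v^{i+k+1}$, that column fails to be counted by $\alpha_{k+1}^i$ precisely when $v_j^{i+k+1}$ coincides with one of the earlier entries $v_j^{i}, v_j^{i+1}, \dots, v_j^{i+k}$ in that column. So the deficit $\alpha_k^i - \alpha_{k+1}^i$ is at most the number of columns in which $v^{i+k+1}$ repeats an entry of one of the rows $v^i,\dots,v^{i+k}$. (Columns not counted by $\alpha_k^i$ in the first place are irrelevant: they only make $\alpha_{k+1}^i$ larger relative to this bound, so the inequality is unaffected.)

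Next I would bound, for each fixed $b$ with $1\le b\le k$, the number of columns in which $v^{i+k+1}$ agrees with $v^{i+k+1-b}$, i.e.\ the number of shared coordinates of these two rows. These rows sit $b$ apart in the ordering with $b < k < t$ (so Proposition~\ref{rcmatham} applies with the index ``$k$'' there equal to $b$), hence they share at most $b-1$ coordinates. Wait --- that gives $\sum_{b=1}^{k}(b-1)$, which is smaller than the claimed $\sum_{b=1}^{k}b$; the stated inequality is then a fortiori true, but to match the paper's bound cleanly I would instead argue: a column fails at step $k+1$ only if $v^{i+k+1}_j$ equals $v^{i+k+1-b}_j$ for some $b\in\{1,\dots,k\}$, and for each such $b$ the number of offending columns is at most the number of coordinates shared by $v^{i+k+1}$ and $v^{i+k+1-b}$, which by Proposition~\ref{rcmatham} is at most $b-1 \le b$. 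Summing over $b=1,\dots,k$ and using that the union is no larger than the sum of the sizes gives
\[
\alpha_k^i - \alpha_{k+1}^i \;\le\; \sum_{b=1}^{k} b,
\]
which rearranges to the claim.

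A subtlety to handle carefully is the range of validity of the index: Proposition~\ref{rcmatham} controls $d(v^i, v^{i-k})$ only for $k < t$, so the pairs $(v^{i+k+1}, v^{i+k+1-b})$ I use must satisfy $b < t$. Since the proposition is vacuous (or the statement trivial) once $k \ge t$, I would either restrict to $k+1 \le t$ explicitly or note that for $k \ge t$ every $\alpha$ in sight has a trivial interpretation and the inequality degenerates. I would also make explicit the ``union bound'' step: if a column $j$ counted by $\alpha_k^i$ is \emph{not} counted by $\alpha_{k+1}^i$, then $v_j^{i+k+1}$ equals $v_j^{i+k+1-b}$ for at least one $b \in \{1,\dots,k\}$ (it cannot equal something ``outside the window,'' since only rows $v^i$ through $v^{i+k}$ are present), so the set of such columns injects into $\bigcup_{b=1}^k \{\text{columns where } v^{i+k+1} \text{ and } v^{i+k+1-b} \text{ agree}\}$.

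The main obstacle is not any deep fact --- it is bookkeeping: being careful that we are only counting columns that were ``alive'' in $\alpha_k^i$, that a dead column cannot contribute a negative term, and that the index bookkeeping in the two differently-named ``$k$''s (the one in Proposition~\ref{rcmatham} versus the subscript on $\alpha$) stays straight. Once the union-bound framing is set up, invoking Proposition~\ref{rcmatham} for each of the $k$ relevant row-pairs and summing is immediate.
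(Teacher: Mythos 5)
There is a genuine gap in your accounting: your union over $b\in\{1,\dots,k\}$ covers the pairs $(v^{i+k+1},v^{i+k+1-b})$ for $b=1,\dots,k$, i.e.\ the rows $v^{i+1}$ through $v^{i+k}$, but it omits the row $v^i$ itself, which sits $k+1$ steps below $v^{i+k+1}$ and corresponds to $b=k+1$. A column counted by $\alpha_k^i$ can die at step $k+1$ solely because $v_j^{i+k+1}=v_j^{i}$, and such a column lies in none of your $k$ sets, so your claim that a column ``fails only if'' it agrees with one of $v^{i+k+1-b}$, $b\le k$, is false. This is also why your first-pass bound $\sum_{b=1}^{k}(b-1)$ looked ``too strong'': it actually \emph{is} false, not merely sharper than needed. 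For $k=1$ it would assert $\alpha_2^i\geq\alpha_1^i=t$, but Proposition~\ref{rcmatham} permits $v^i$ and $v^{i+2}$ to share one coordinate (and Theorem~\ref{BoundaryForcedGeneral} shows this is forced for, e.g., $K_3^4$), giving $\alpha_2^i=t-1$. Your subsequent patch of weakening $b-1$ to $b$ happens to land on the correct total $\sum_{b=1}^{k}b$, but it does so by inflating the bound on the wrong collection of pairs rather than by including the missing one, so the derivation does not actually establish the inequality.

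The repair is small and brings you exactly to the paper's argument: take the union over all $k+1$ pairs, $b=1,\dots,k+1$; the pair at gap $b$ contributes at most $b-1$ shared coordinates by Proposition~\ref{rcmatham} (and for $b\geq t$ the bound $b-1\geq t-1$ holds trivially for distinct rows, which disposes of your range-of-validity worry); the total is $\sum_{b=1}^{k+1}(b-1)=0+1+\cdots+k=\sum_{b=1}^{k}b$. Everything else in your write-up --- restricting attention to columns alive in $\alpha_k^i$, the union-bound framing, the observation that dead columns only help --- matches the paper's one-line proof and is fine.
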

\begin{proof}
This is a direct consequence of Proposition \ref{rcmatham} since $v^{i+k+1}$ may share at most $k+(k-1)+\cdots+1+0$ total coordinates with $v^i,v^{i+1},\ldots,v^{i+k}$ so that there are at most $\sum_{b=1}^kb$ columns not counted in $\alpha_{k+1}^i$ that were counted in $\alpha_k^i$.
\end{proof}
\begin{prop}\label{alphabound}
For any $1\leq k\leq m$ and any possible $i$,
$$\alpha^i_{n_k}\leq t - \overline{t}_k$$
\end{prop}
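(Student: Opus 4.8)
The plan is to identify exactly which columns of $O$ can possibly be counted by $\alpha^i_{n_k}$, and to bound their number directly. Recall that column $j$ of an ordering draws all of its entries from $V(K_{n_l})$, where $l$ is the index determined by $\overline{t}_{l-1} < j \leq \overline{t}_l$; call this the \emph{type} of column $j$. There are exactly $t_l$ columns of type $l$, and $\sum_{l=1}^m t_l = t$.

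First I would observe that no column of type $l$ with $l \leq k$ can contribute to $\alpha^i_{n_k}$. Indeed, the entries of such a column all lie in $V(K_{n_l})$, a set of size $n_l$, and since $n_1 < n_2 < \cdots < n_m$ we have $n_l \leq n_k$. But by Definition \ref{alphadef} (with the parameter $n_k$ in place of $k$), $\alpha^i_{n_k}$ counts precisely those columns $j$ for which the $n_k+1$ entries $v_j^i, v_j^{i+1}, \ldots, v_j^{i+n_k}$ are pairwise distinct; by the pigeonhole principle this is impossible as soon as those entries are confined to a set of size $n_l \leq n_k < n_k + 1$.

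Consequently, every column counted by $\alpha^i_{n_k}$ must have type $l$ for some $l$ with $k < l \leq m$. The number of such columns is $\sum_{l=k+1}^m t_l = t - \overline{t}_k$, and hence $\alpha^i_{n_k} \leq t - \overline{t}_k$, as claimed. There is no substantial obstacle here beyond bookkeeping: the only two ingredients are (i) translating ``column $j$ has entries from $V(K_{n_l})$'' into the pigeonhole bound above, and (ii) invoking the standing convention $n_1 < \cdots < n_m$ to guarantee $n_l \leq n_k$ whenever $l \leq k$. One should of course read ``any possible $i$'' as restricting to those $i$ for which $v^{i+n_k}$ exists, but with that understood the argument goes through verbatim; note also that the case $k = m$ yields $\alpha^i_{n_m} = 0$, consistent with the fact that every column repeats a value among any $n_m + 1$ consecutive entries.
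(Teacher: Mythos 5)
Your proof is correct and is essentially identical to the paper's: both arguments note that the $\overline{t}_k$ columns drawn from $V(K_{n_l})$ with $n_l \leq n_k$ cannot have $n_k+1$ pairwise distinct consecutive entries by pigeonhole, leaving at most $t - \overline{t}_k$ columns that can be counted. Your write-up is just a more explicit version of the same bookkeeping.
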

\begin{proof}
For any such $k$ and $i$, there are $\overline{t}_k$ columns of $O$ which are chosen from $V(K_{n_i})$ where $n_i\leq n_k$ so that none of these columns can count towards $\alpha_{n_k}^i$ since it cannot be that $\{v^i_j,\ldots,v^{i+n_k}_j\}$ are pairwise distinct if these $n_k+1$ elements are chosen from a set of $n_k$ or fewer elements. There are $t$ total columns so that $t-\overline{t}_k$ is an upper bound on $\alpha_{n_k}^i$. 
\end{proof}

With all of this under our belt, we are now ready to prove the bound. Note that the following corollary is found in (\cite{Amanda}, Corollary 12) and this is simply a different and more direct proof (in that if $G = K_n^t$, i.e. $m=1$, then the following is a direct proof) using the construction above that is defined for the more general Hamming graphs.
\begin{theorem}\label{upperboundgeneral}
Given $G = K_{n_1}^{t_1}\square\cdots\square K_{n_m}^{t_m}$, if $\overline{t}_k\geq 1 + \frac{n_k(n_k^2-1)}{6}$ for any $k$, then $G$ is not radio graceful.
\end{theorem}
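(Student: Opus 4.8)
The plan is to proceed by contradiction, using the $\alpha$-machinery to pit Proposition \ref{alphastep} against Proposition \ref{alphabound}. Suppose $G$ is radio graceful; by Proposition \ref{rcmatham} there is an ordering $O$ of $V(G)$ inducing a consecutive radio labeling, and fix an index $k$ with $\overline{t}_k \ge 1 + \frac{n_k(n_k^2-1)}{6}$. I first need an admissible starting index $i$, i.e.\ one with $1 \le i \le N - n_k$; since $N = \prod_j n_j^{t_j}$ is comfortably larger than $n_k$ (the hypothesis already forces $\overline{t}_k \ge 2$, hence more than enough coordinates), such an $i$ exists --- take $i = 1$. All the $\alpha_\ell^i$ of Definition \ref{alphadef} are then defined for $0 \le \ell \le n_k$.

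The heart of the argument is to iterate the one-step bound $\alpha_{\ell+1}^i \ge \alpha_\ell^i - \sum_{b=1}^{\ell} b$ of Proposition \ref{alphastep}, starting from $\alpha_0^i = t$, for $\ell = 0, 1, \dots, n_k - 1$. Telescoping gives
\[
\alpha_{n_k}^i \ \ge\ t \ -\ \sum_{\ell=1}^{n_k-1}\ \sum_{b=1}^{\ell} b .
\]
I would then evaluate the nested sum in closed form: $\sum_{b=1}^{\ell} b = \binom{\ell+1}{2}$, and summing these triangular numbers gives $\sum_{\ell=1}^{n_k-1}\binom{\ell+1}{2} = \binom{n_k+1}{3} = \frac{n_k(n_k^2-1)}{6}$. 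Hence $\alpha_{n_k}^i \ge t - \frac{n_k(n_k^2-1)}{6}$.

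Finally I combine this with the ceiling supplied by Proposition \ref{alphabound}, namely $\alpha_{n_k}^i \le t - \overline{t}_k$. Putting the two together and cancelling $t$ yields $\overline{t}_k \le \frac{n_k(n_k^2-1)}{6}$, which is strictly less than $1 + \frac{n_k(n_k^2-1)}{6}$ and so contradicts the choice of $k$. Therefore no such ordering exists, and $G$ is not radio graceful.

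The logical skeleton here is short, so the only real work is the arithmetic identity $\sum_{\ell=1}^{n_k-1}\sum_{b=1}^{\ell} b = \frac{n_k(n_k^2-1)}{6}$ (recognizing the partial sums of triangular numbers as tetrahedral numbers) together with the minor bookkeeping that $N$ is large enough for an admissible $i$ to exist; I expect the former to be the most error-prone step, though it is entirely routine. The genuine content of the theorem has already been packed into Propositions \ref{alphastep} and \ref{alphabound}.
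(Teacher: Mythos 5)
Your proposal is correct and follows essentially the same route as the paper: telescoping Proposition \ref{alphastep} from $\alpha_1^i = t$ down to $\alpha_{n_k}^i$, evaluating the nested sum as $\frac{n_k(n_k^2-1)}{6}$, and contradicting the ceiling of Proposition \ref{alphabound}. The only difference is cosmetic --- you explicitly note that an admissible index $i$ exists, which the paper takes for granted.
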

\begin{proof}
Suppose, for the sake of contradiction, that $G$ is radio graceful where $\overline{t}_k\geq 1 + \frac{n_k(n_k^2-1)}{6}$. Then there exists an ordering $O$ of $V(G)$ that induces a consecutive radio labeling. Let $i$ be any fixed positive integer less than $N-n_k$. By Proposition \ref{alphastep}, the following string of inequalities hold,
$$\alpha_{n_k}^i\geq \alpha_{n_k-1}^i-\sum_{b=1}^{n_k-1}b\geq \alpha_{n_k-2}^i - \sum_{b=1}^{n_k-2}b - \sum_{b=1}^{n_k-1}b\geq\cdots\geq\alpha_1^i - \sum_{a=1}^{n_k-1}\sum_{b=1}^ab = t - \frac{n_k(n_k^2-1)}{6} > t-\overline{t}_k$$
But $\alpha_{n_k}^i > t-\overline{t}_k$ contradicts Proposition \ref{alphabound}. Therefore $G$ is not radio graceful when $\overline{t}_k\geq 1 + \frac{n_k(n_k^2-1)}{6}$.
\end{proof}
\begin{corollary}\label{upperbound}
If $t\geq 1 + \frac{n(n^2-1)}{6}$, then $K_n^t$ is not radio graceful.
\end{corollary}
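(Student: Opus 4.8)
The plan is to observe that this corollary is simply the specialization of Theorem \ref{upperboundgeneral} to the single-factor case. I would take $m = 1$ in the statement of the theorem, so that $G = K_{n_1}^{t_1}$ with $n_1 = n$ and $t_1 = t$, and hence $\overline{t}_1 = t_1 = t$. Under this identification the hypothesis ``$\overline{t}_k \geq 1 + \frac{n_k(n_k^2-1)}{6}$ for some $k$'' becomes, with the only available choice $k = 1$, exactly the hypothesis ``$t \geq 1 + \frac{n(n^2-1)}{6}$'' of the corollary.

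From there, Theorem \ref{upperboundgeneral} directly yields that $G = K_n^t$ is not radio graceful, which is precisely the conclusion we want. No new computation or construction is needed; the work has all been done in proving the general bound (via Propositions \ref{alphastep}, \ref{alphabound}, and the telescoping sum $\sum_{a=1}^{n-1}\sum_{b=1}^{a} b = \frac{n(n^2-1)}{6}$).

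Since the argument is a one-line deduction, there is essentially no obstacle here: the only thing to check is the bookkeeping that $m=1$ really does collapse the general hypothesis to the stated one, which is immediate from the definition $\overline{t}_k := \sum_{i=1}^{k} t_i$. I would simply write: ``This is the case $m = 1$ of Theorem \ref{upperboundgeneral}.'' and, if desired, spell out the one substitution above for the reader's convenience.
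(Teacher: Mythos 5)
Your proposal is correct and matches the paper exactly: the corollary is stated without separate proof precisely because it is the $m=1$ specialization of Theorem \ref{upperboundgeneral}, with $\overline{t}_1 = t$ collapsing the general hypothesis to $t \geq 1 + \frac{n(n^2-1)}{6}$. Your bookkeeping of the substitution is accurate and nothing further is needed.
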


Furthermore, we can by a similar method, using Definition \ref{alphadef}, prove something constructive about graphs that are right underneath our bound.

\begin{theorem}\label{BoundaryForcedGeneral}
Let $G = K_{n_1}^{t_1}\square\cdots\square K_{n_m}^{t_m}$ with $\overline{t}_k = \frac{n_k(n_k^2-1)}{6}$ for some $k$. If $O$ is any ordering of $V(G)$ that induces a consecutive radio labeling, then for every $i$ and every $j\leq n_k$, $v^i$ and $v^{i+j}$ must share exactly $j-1$ coordinates.
\end{theorem}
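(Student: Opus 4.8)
The plan is to push the inequality chain from the proof of Theorem~\ref{upperboundgeneral} all the way to equality and then read off the configuration this forces. The hypothesis $\overline{t}_k = \frac{n_k(n_k^2-1)}{6}$ is exactly what makes the lower bound $\alpha_{n_k}^i \ge t - \frac{n_k(n_k^2-1)}{6}$, obtained by chaining Proposition~\ref{alphastep}, coincide with the upper bound $\alpha_{n_k}^i \le t - \overline{t}_k$ of Proposition~\ref{alphabound}. So, fixing $i$ with $i + n_k \le N$ (the remaining indices are dealt with at the end), we get $\alpha_{n_k}^i = t-\overline{t}_k$, and consequently every one of the steps $\alpha_{j+1}^i \ge \alpha_j^i - \sum_{b=1}^{j}b$ for $0 \le j \le n_k-1$ must be an equality (the $j=0$ step trivially).

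The heart of the argument is to see what equality at step $j$ yields. Recall from the proof of Proposition~\ref{alphastep} that a column $\ell$ is counted in $\alpha_j^i$ but not in $\alpha_{j+1}^i$ exactly when $v^{i+j+1}_\ell = v^{i+s}_\ell$ for some $s\in\{0,\dots,j\}$, and this $s$ is unique because $v^i_\ell,\dots,v^{i+j}_\ell$ are pairwise distinct for such a column. Hence $\alpha_j^i - \alpha_{j+1}^i$ equals a sum over $s$ of the number of $\alpha_j^i$-columns on which $v^{i+j+1}$ and $v^{i+s}$ agree, which is at most $\sum_{s=0}^{j}\big(\text{number of coordinates where } v^{i+j+1}\text{ and }v^{i+s}\text{ agree}\big) \le \sum_{s=0}^{j}(j-s) = \sum_{b=1}^{j}b$, the second inequality being Proposition~\ref{rcmatham} (or, in the boundary case that the gap $j+1-s$ equals $t$, just the absence of repetition). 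Since the outer quantity must equal $\sum_{b=1}^{j}b$, every intermediate inequality is tight; in particular, for each $s$, $v^{i+j+1}$ and $v^{i+s}$ share \emph{exactly} $j-s$ coordinates. Specializing to $s=0$ and letting $j$ run from $0$ to $n_k-1$ gives that $v^i$ and $v^{i+j}$ share exactly $j-1$ coordinates for all $1 \le j \le n_k$, which is the claim. (This recovers and generalizes the remark at the end of the $K_3^5$ example about orderings of $K_3^4$.)

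The step I expect to be the real obstacle is the bookkeeping in the middle paragraph: one must check carefully that tightness of the chain forces tightness of each individual overlap bound ``$v^{i+j+1}$ and $v^{i+s}$ share $\le j-s$ coordinates''. The delicate point is that the ``lost'' columns are \emph{partitioned} by (not merely covered by) the agreements of $v^{i+j+1}$ with the various $v^{i+s}$, since on a column counted in $\alpha_j^i$ the vertex $v^{i+j+1}$ matches at most one of $v^i,\dots,v^{i+j}$ — so a restricted sum coincides with the unrestricted one and each per-$s$ term must hit its maximum. Finally, the only pairs not covered above are those with $N - n_k < i \le N - j$ (which occur only when $j < n_k$); for them I would run the identical collapse on the length-$(n_k+1)$ segment $v^{i+j-n_k},\dots,v^{i+j}$, reading off the overlap of its last vertex $v^{i+j}$ with $v^{i} = v^{(i+j-n_k)+(n_k-j)}$. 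That segment exists as long as $i + j - n_k \ge 1$, which holds in every case except the trivial $G = K_2$, where the conclusion is immediate.
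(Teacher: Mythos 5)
Your proof is correct and follows essentially the same route as the paper: the paper runs the identical chain of inequalities from Proposition~\ref{alphastep} against the bound of Proposition~\ref{alphabound}, but phrases it as a contradiction (a deficient overlap makes one link strict, forcing $\alpha_{n_k}^i > t-\overline{t}_k$), whereas you squeeze the chain to equality and read off the forced overlaps directly. Your extra bookkeeping on why tightness forces each pairwise overlap to be exact (via the partition of lost columns by the unique matching index $s$), and your treatment of the tail indices $i > N-n_k$, are both sound and address points the paper's proof leaves implicit.
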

\begin{proof}
Suppose, for the sake of contradiction, that for some $i$ and some $j\leq n_k$, $v^i$ and $v^{i+j}$ don't share exactly $j-1$ coordinates. Then it must be the case that they share fewer than $j-1$ coordinates due to Proposition \ref{rcmatham}. This implies that $$\alpha_j^i>\alpha_{j-1}^i - \sum_{b=1}^{j-1}b$$ by Proposition \ref{alphastep} where we cannot have equality since $v^i$ and $v^{i+j}$ share fewer than $j-1$ coordinates. Thus, we can again use Proposition \ref{alphastep} to get the following string of inequalities,
$$\alpha_{n_k}^i\geq\alpha_{n_k-1}^i - \sum_{b=1}^{n_k-1}b\geq\cdots\geq\alpha_j^i-\sum_{a=j}^{n_k-1}\sum_{b=1}^ab > \alpha_{j-1}^i-\sum_{a=j-1}^{n_k-1}\sum_{b=1}^ab\geq\cdots\geq \alpha_1^i - \sum_{a=1}^{n_k-1}\sum_{b=1}^ab = t - \overline{t}_k$$
But $\alpha_{n_k}^i > t-\overline{t}_k$ contradicts Proposition \ref{alphabound}. Therefore, if $O$ induces a consecutive radio labeling, then $v^i$ and $v^{i+j}$ must share exactly $j-1$ coordinates.
\end{proof}
\begin{corollary}\label{BoundaryForced}
In any ordering, $O$, of the vertices of $K_n^{\frac{n(n^2-1)}{6}}$ that induces a consecutive radio labeling, $v^i$ and $v^{i+j}$ for every $i$ and every $j\leq n$ must share exactly $j-1$ coordinates.
\end{corollary}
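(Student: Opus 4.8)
The plan is to observe that Corollary \ref{BoundaryForced} is exactly the single-factor case $m=1$ of Theorem \ref{BoundaryForcedGeneral}. Taking $n_1 = n$ and $t_1 = t = \frac{n(n^2-1)}{6}$, we have $G = K_n^t$ and $\overline{t}_1 = t_1 = \frac{n_1(n_1^2-1)}{6}$, so the hypothesis of Theorem \ref{BoundaryForcedGeneral} is met with $k=1$. Since $n_k = n_1 = n$, the conclusion of that theorem states precisely that for every $i$ and every $j \le n$, the rows $v^i$ and $v^{i+j}$ share exactly $j-1$ coordinates, which is the desired statement. So in principle one sentence suffices.

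If a self-contained argument is preferred, I would simply replay the proof of Theorem \ref{BoundaryForcedGeneral} in this setting. Suppose $O$ is an ordering of $V(K_n^t)$ inducing a consecutive radio labeling but, for some $i$ and some $j \le n$, the rows $v^i$ and $v^{i+j}$ fail to share exactly $j-1$ coordinates. By Proposition \ref{rcmatham} they must then share \emph{fewer} than $j-1$ coordinates, so Proposition \ref{alphastep} gives the strict inequality $\alpha_j^i > \alpha_{j-1}^i - \sum_{b=1}^{j-1} b$. Chaining Proposition \ref{alphastep} above and below this one strict step, and using $\alpha_1^i = t$ from Definition \ref{alphadef}, yields $\alpha_n^i > t - \sum_{a=1}^{n-1}\sum_{b=1}^{a} b = t - \frac{n(n^2-1)}{6} = t - \overline{t}_1$. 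But Proposition \ref{alphabound} with $k=1$ gives $\alpha_n^i = \alpha_{n_1}^i \le t - \overline{t}_1$, a contradiction.

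The only arithmetic input is the identity $\sum_{a=1}^{n-1}\sum_{b=1}^{a} b = \frac{n(n^2-1)}{6}$, which is exactly what makes Proposition \ref{alphabound} tight in this regime; everything else is bookkeeping with the quantities $\alpha_k^i$. I do not anticipate a genuine obstacle here. The one point requiring a little care is making sure the strict inequality created by the failure of the sharing condition propagates all the way down to $\alpha_n^i$ — but it does, since each further application of Proposition \ref{alphastep} only adds inequalities pointing in the same direction.
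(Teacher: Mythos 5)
Your proposal is correct and matches the paper exactly: the corollary is stated as an immediate specialization of Theorem \ref{BoundaryForcedGeneral} with $m=1$, $n_1=n$, $\overline{t}_1=t=\frac{n(n^2-1)}{6}$, and the paper offers no separate proof. Your self-contained backup argument is just the proof of that theorem replayed in the single-factor case, so there is nothing to add.
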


Lastly, another result from Niedzialomski's paper that I will not be showing here is that if $t\leq n$, then $G = K_n^t$ is radio graceful. She cleverly constructs a consecutive radio labeling using permutations of smaller segments of an ordering for $G$, but this construction does not give a consecutive radio labeling for $K_n^{n+1}$ so we must find a different approach to generating labelings in order to fill the gap in Figure \ref{Fig:gap}.

\section{Generating Orderings}
Orderings have allowed us to restate the problem of radio labeling as one of constructing matrices that satisfy certain conditions. This abstraction not only makes the problem easier to reason about, especially with Hamming Graphs, but also leads us to certain results rather directly. In this section, I will introduce a new family of matrices that fully encapsulate radio labeling of Hamming Graphs in the hopes that this new paradigm will be helpful in finding new results and closing the gap in Figure \ref{Fig:gap}. These new matrices can be viewed as containing `instructions' for generating an ordering; however, we will often be able to restate the radio condition in the context of these generating matrices so that we need not consider orderings anymore, just as orderings allowed us to essentially disregard the graph concept.\\

Proposition \ref{rcmatham} states the radio condition for orderings in such a way that allows us to view columns as somewhat independent, as has been mentioned earlier. In particular, if we are attempting to generate an ordering and we come to any entry, $v^i_j$, we need only inspect column $j$ when asking how this entry affects our radio condition considerations. As such, we will restrict ourselves to generating columns of orderings and find a mechanism for detecting repetition within a column, i.e. when two rows share a coordinate in this column.

\begin{definition}
Let $\Delta_n\subset S_n$ containing $n-1$ elements $\Delta_n = \{f_2,\ldots,f_n\}$ such that $f_k(k)=1$ for each $k$. Any $\Delta_n$ satisfying these conditions is called an \textbf{instruction set}. Call the collection $\{\Delta_n^i:S_n^{i-1}\rightarrow\mathcal{P}(S_n)\mid 2\leq i\leq N\}$, denoted simply by $\Delta_n^i$, an \textbf{instruction set generator} if every element in the image of each $\Delta_n^i$ is an instruction set. An instruction set generator can be viewed as a function that takes as input the current state of the column and generates an instruction set containing the possible next instructions. We will use the convention that $\sigma_1\sigma_2 = \sigma_2\circ\sigma_1$ when denoting composition of permutations.
\end{definition}

\begin{definition}\label{AnBndef}
Given integers $N,n\geq 3$, and an instruction set generator, $\Delta_n^i$, define
\begin{align*}
A_n &= \left\{\vec{u} = \begin{pmatrix}
v_{i_1}=v_1\\v_{i_2}=v_2\\v_{i_3}\\\vdots\\v_{i_N}
\end{pmatrix}\mid v_{i_j}\in V(K_n), v_{i_j}\neq v_{i_{j+1}}\right\}\\
B_n &= \left\{\vec{u'} = \begin{pmatrix}
\sigma_1 = id\\\sigma_2 = f_2\\\sigma_3\\\vdots\\\sigma_N
\end{pmatrix}\mid\sigma_i\in\Delta_n^i(\sigma_1,\ldots,\sigma_{i-1})\text{ for }i\geq 2\right\}.
\end{align*}
Elements of $A_n$ can be thought of columns of orderings which correspond to $K_n$ while elements of $B_n$ will be the columns of our new matrix that can be viewed as generating a corresponding element in $A_n$.
\end{definition}
We will construct a 1-1 correspondence between $A_n$ and $B_n$ using the following,
\begin{definition}\label{actiondef}
Let $D_n$ be the subset of $(V(K_n))^n$ whose coordinates are all distinct, i.e. $(v_{s_1},\ldots,v_{s_n})\in D_n$ if and only if $v_{s_i}\in V(K_n)$ for all $1\leq i\leq n$ and $v_{s_i}\neq v_{s_j}$ for all $1\leq i < j\leq n$. Define an action from $S_n$ on $D_n$ by $\sigma\cdot(v_{s_1},\ldots,v_{s_n}) = (v_{s_{\sigma^{-1}(1)}},\ldots,v_{s_{\sigma^{-1}(n)}})$ where $\sigma\in S_n$. We have $S_n$ act on $D_n$ in such a way so that we can talk about $\sigma = (135)$, for example, as being the instruction that sends the first coordinate of an element of $D_n$ to the third place, the third to the fifth and the fifth back to the first.
\end{definition}
\begin{definition}\label{phidef}
We now construct our bijection between $A_n$ and $B_n$. Define $(D_n^N)'$ to be the subset of $D_n^N$ whose elements $\begin{pmatrix}(v_1^1,\ldots,v_n^1)\\\vdots\\(v_1^N,\ldots,v_n^N)\end{pmatrix}$ satisfy the following constraints: that $v_1^j\neq v_1^{j+1}$ for all $j$, $v_i^1 = v_i$ for $1\leq i\leq n$ and $v_1^2 = v_2$.\\
Let $\varphi:B_n\rightarrow (D_n^N)'$ be defined as follows, if $\vec{u'} = \begin{pmatrix}
\sigma_1\\\vdots\\\sigma_N
\end{pmatrix}\in B_n$, then let
$$\varphi(\vec{u'}) = \begin{pmatrix}
o_1 = (v_1,\ldots,v_n)\\o_2 = \sigma_2\cdot o_1\\\vdots\\o_N = \sigma_N\cdot o_{N-1}
\end{pmatrix}.$$
Note that $\varphi(\vec{u'})\in(D_n^N)'$ because $\sigma_2=f_2$ so that $o_2$ has $2$ as its first coordinate, and since $\sigma^{-1}(1)\neq 1$ for all $\sigma\in\Delta_n$.\\
Next, let $\pi_1:(D_n^N)'\rightarrow A_n$ be the projection to the first coordinate, i.e. $$\pi_1\begin{pmatrix}(v_1^1,\ldots,v_n^1)\\\vdots\\(v_1^N,\ldots,v_n^N)\end{pmatrix} = \begin{pmatrix}v_1^1\\\vdots\\v_1^N\end{pmatrix}.$$
Finally, define $\phi_n:B_n\rightarrow A_n$ by composition
$$\phi_n := \pi_1\circ\varphi.$$
\end{definition}

\begin{theorem}\label{corr}
If $n,N\geq 3$ are any integers, and $\Delta_n^i$ is any instruction set generator, then $\phi_n$, as in Definition \ref{phidef}, is a 1-1 correspondence between $A_n$ and $B_n$.
\end{theorem}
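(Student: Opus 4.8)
The plan is to show that $\phi_n\colon B_n\to A_n$ is both injective and surjective, arguing one row at a time. Two features of an instruction set $\{f_2,\dots,f_n\}$ will do all the work: each $f_k$ satisfies $f_k(k)=1$, equivalently $f_k^{-1}(1)=k$, and the $f_k$ are pairwise distinct (an instruction set has $n-1$ elements). I would first record the bookkeeping fact that, for any $\vec{u'}=(\sigma_1,\dots,\sigma_N)\in B_n$, every intermediate listing $o_j=\sigma_j\cdot o_{j-1}$ built by $\varphi$ lies in $D_n$, i.e. lists each vertex of $K_n$ exactly once; this is immediate by induction since the $S_n$-action only permutes coordinates. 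Combined with the action convention, this gives the one computation used repeatedly: the first coordinate of $\sigma\cdot o$ is the coordinate of $o$ in position $\sigma^{-1}(1)$, so the first coordinate of $f_k\cdot o$ is precisely the $k$-th coordinate of $o$.

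For surjectivity I would fix $\vec{u}=(v_{i_1},\dots,v_{i_N})\in A_n$ and construct $\vec{u'}=(\sigma_1,\dots,\sigma_N)\in B_n$ inductively so that the first coordinate of $o_j$ equals $v_{i_j}$ for all $j$, which says exactly $\phi_n(\vec{u'})=\vec{u}$. The first two rows are forced by the definition of $B_n$: $\sigma_1=id$ yields $o_1=(v_1,\dots,v_n)$ with first coordinate $v_1=v_{i_1}$, and $\sigma_2=f_2$ makes the first coordinate of $o_2$ the second coordinate of $o_1$, namely $v_2=v_{i_2}$. For the step from $i-1$ to $i$, suppose $o_{i-1}\in D_n$ has $v_{i_{i-1}}$ in position $1$. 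The instruction set $\Delta_n^i(\sigma_1,\dots,\sigma_{i-1})=\{f_2,\dots,f_n\}$ supplies, for every $k\in\{2,\dots,n\}$, the permutation $f_k$ that moves the $k$-th coordinate of $o_{i-1}$ into position $1$. Because consecutive entries of $\vec{u}$ differ, $v_{i_i}\neq v_{i_{i-1}}$, so $v_{i_i}$ sits in a unique position $k\ge 2$ of $o_{i-1}$; taking $\sigma_i=f_k$ puts $v_{i_i}$ in position $1$ of $o_i$, and $\sigma_i$ belongs to the prescribed instruction set, so $\vec{u'}\in B_n$ and $\phi_n(\vec{u'})=\vec{u}$.

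For injectivity I would take distinct $\vec{u'}=(\sigma_j)$ and $\vec{u''}=(\sigma_j')$ in $B_n$ and let $i$ be the first index where they differ. Since $\sigma_1,\dots,\sigma_{i-1}$ agree, the listings agree up to $o_{i-1}=o_{i-1}'$, and $\sigma_i,\sigma_i'$ both lie in the one instruction set $\Delta_n^i(\sigma_1,\dots,\sigma_{i-1})$; being distinct members of $\{f_2,\dots,f_n\}$, they are $f_k$ and $f_{k'}$ with $k\neq k'$. Then the first coordinate of $o_i$ is the $k$-th coordinate of $o_{i-1}$ and the first coordinate of $o_i'$ is its $k'$-th coordinate, and these differ because $o_{i-1}\in D_n$ has pairwise distinct coordinates. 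Hence $\phi_n(\vec{u'})$ and $\phi_n(\vec{u''})$ already disagree in row $i$. (Equivalently, one can note first that $\varphi$ is injective because $S_n$ acts freely on $D_n$, and then observe that the same computation shows $\pi_1$ is injective on the image of $\varphi$.)

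The main obstacle is the inductive step of the surjectivity argument: we must be certain that whatever instruction set the generator $\Delta_n^i$ hands us from the history $(\sigma_1,\dots,\sigma_{i-1})$, it contains an instruction producing the next prescribed vertex $v_{i_i}$. This is guaranteed precisely by the definition of an instruction set — it contains, for each of the $n-1$ ``non-first'' positions, a unique permutation carrying that position to position $1$ — together with the fact that $o_{i-1}$ is an honest permutation of $V(K_n)$, so $v_{i_i}$ occupies exactly one such position. Everything else (the base case, the invariant $o_j\in D_n$, and the injectivity computation) is routine once this point is isolated.
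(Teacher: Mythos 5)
Your proof is correct, and the surjectivity half is essentially identical to the paper's: both arguments build $\sigma_{j+1}$ inductively by locating the position $k$ of the prescribed next vertex in $o_j$ and selecting the unique instruction $f_k$ with $f_k(k)=1$ from whatever instruction set the generator supplies. Where you diverge is in how injectivity is dispatched. The paper observes that $A_n$ and $B_n$ each have exactly $(n-1)^{N-2}$ elements (two entries of each vector are forced, and each remaining entry admits $n-1$ choices), so surjectivity alone yields bijectivity. You instead give a direct injectivity argument: at the first index $i$ where two elements of $B_n$ differ, the shared history forces $o_{i-1}=o_{i-1}'$, and distinct instructions $f_k,f_{k'}$ pull distinct coordinates of $o_{i-1}\in D_n$ into position $1$, so the images already differ in row $i$. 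Both routes are sound. The counting argument is shorter but leans on the finiteness of the sets and on the reader verifying the cardinality computation; your direct argument is slightly longer but self-contained, makes explicit why the correspondence is faithful row by row, and isolates the two properties of an instruction set (each $f_k$ satisfies $f_k^{-1}(1)=k$, and distinct instructions have distinct such $k$) that make the whole construction work. Your explicit remark that every intermediate listing $o_j$ remains in $D_n$ is a useful piece of bookkeeping that the paper leaves implicit.
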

\begin{proof}
Since $A_n$ and $B_n$ both have $(n-1)^{N-2}$ elements, we need only show that $\phi_n$ is surjective. Given $\vec{u} = \begin{pmatrix}
v_{i_1}\\\vdots\\v_{i_N}
\end{pmatrix}\in A_n$, let $\vec{u''} = \begin{pmatrix}
o_1\\\vdots\\o_N
\end{pmatrix}$ where $o_1=(v_1,\ldots,v_n)$, $\sigma_1=id\in S_n$, and if $o_j = (v_{s_1},\ldots,v_{s_n})$ and $i_{j+1} = s_k$, then $\sigma_{j+1} = f_k\in\Delta_n^{j+1}(\sigma_1,\ldots,\sigma_j)$ and $o_{j+1} = \sigma_{j+1}\cdot o_j$. By construction, we have that $\pi_1(\vec{u''}) = \vec{u}$ and that $\vec{u''}$ is in $Im(\varphi)$, specifically, $\vec{u'} = \begin{pmatrix}
\sigma_1\\\vdots\\\sigma_N
\end{pmatrix}\in B_n$ with $\phi_n(\vec{u'}) = \vec{u}$ making $\phi_n$ surjective and therefore bijective.
\end{proof}

With this theorem, we are ready to restate the problem of radio labeling as a problem of finding matrices that generate orderings.

\begin{definition}
Given $G = K_{n_1}\square\cdots\square K_{n_t}$, a Hamming graph (the $n_i$ are not necessarily distinct), a \textbf{weak order-generator}, $O'$, for $V(G)$ (with respect to choices of $\Delta_{n_k}^i$ for each $n_k$) is an $N\times t$ matrix where every entry of the first row is $id$, every entry of the second row is $f_2$, and for all but the first row, elements of column $j$ are chosen using $\Delta_{n_j}^i$. We denote the $j$th element of the $i$th row by $\sigma_i^j$.
\end{definition}
\begin{definition}
Given $G = K_{n_1}\square\cdots\square K_{n_t}$ and choices of $\Delta_{n_k}^i$ for each $n_k$, define $\Phi_G$ to be a function from the set of weak order-generators for $V(G)$ to the set of all weak orderings of $V(G)$ whose first two rows are $(v_1,\ldots,v_1)$ and $(v_2,\ldots,v_2)$. Given $O'=[c_1\ \cdots\ c_t]$, $\Phi_G(O') := [\phi_{n_1}(c_1)\ \cdots\ \phi_{n_t}(c_t)]$, i.e. $\Phi_G$ applies the corresponding $\phi_n$ (using the corresponding $\Delta_{n_j}^i$) to each column. Since each of the $\phi_n$ are bijections, so is $\Phi_G$.
\end{definition}
Notice that given any ordering that induces a consecutive radio labeling of $G$, we can always use Lemma \ref{permutecol} to get a new ordering whose first two rows are $(v_1,\ldots,v_1)$ and $(v_2,\ldots,v_2)$, as required above, that also induces a consecutive radio labeling.
\begin{definition}
An \textbf{order-generator} of $V(G)$ (with respect to choices of $\Delta_{n_k}^i$ for each $n_k$) is a weak order-generator, $O'$, such that $\Phi_G(O')$ is an ordering, i.e. $\Phi_G(O')$ has no row repetition. Note that if $\Phi_G$ is restricted to only order-generators, it becomes a bijection between all order-generators and all orderings.
\end{definition}

The following is an example where an ordering of $K_3^2$ that induces a consecutive radio labeling is shown next to the its generator and intermediate construction where $$\Delta_3 = \{f_2 = (12),f_3 = (123)\}.$$
is used for both columns. That is, the constant function $\Delta_3^i = \Delta_3$.
$$O=\begin{bmatrix}
v_1&v_1\\
v_2&v_2\\
v_3&v_3\\
v_1&v_2\\
v_2&v_3\\
v_3&v_1\\
v_1&v_3\\
v_2&v_1\\
v_3&v_2
\end{bmatrix}\leftrightarrow
\varphi(O') = 
\begin{pmatrix}
(v_1,v_2,v_3)&(v_1,v_2,v_3)\\
(v_2,v_1,v_3)&(v_2,v_1,v_3)\\
(v_3,v_2,v_1)&(v_3,v_2,v_1)\\
(v_1,v_3,v_2)&(v_2,v_3,v_1)\\
(v_2,v_1,v_3)&(v_3,v_2,v_1)\\
(v_3,v_2,v_1)&(v_1,v_3,v_2)\\
(v_1,v_3,v_2)&(v_3,v_1,v_2)\\
(v_2,v_1,v_3)&(v_1,v_3,v_2)\\
(v_3,v_2,v_1)&(v_2,v_1,v_3)
\end{pmatrix}
\leftrightarrow
O'=\begin{bmatrix}
id&id\\
f_2&f_2\\
f_3&f_3\\
f_3&f_2\\
f_3&f_2\\
f_3&f_3\\
f_3&f_2\\
f_3&f_2\\
f_3&f_3
\end{bmatrix}$$
The reader is encouraged to write out their own example, even if it be just for a single column, and use this as reference later in this section and the next.\\

Now that we have the construction of order-generators, we wish to find a way of detecting repetition in the orderings that are generated.

\begin{lemma}\label{replem}
Let $A_n$ and $B_n$ be as usual (for a fixed $\Delta_n^i$). If $\phi_n(\begin{pmatrix}\sigma_1\\\vdots\\\sigma_N\end{pmatrix}) = \begin{pmatrix}v_{i_1}\\\vdots\\v_{i_{N}}\end{pmatrix}$, then
$$v_{i_j} = v_{i_{j+k}} \Leftrightarrow \sigma_{j+1}\sigma_{j+2}\cdots\sigma_{j+k}(1) = 1.$$
\end{lemma}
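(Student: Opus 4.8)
The plan is to unwind the definition of $\phi_n = \pi_1 \circ \varphi$ and track how the first coordinate of $o_j$ evolves as we apply the permutations $\sigma_{j+1}, \sigma_{j+2}, \dots$. First I would set up notation: write $o_j = (v_{s_1^j}, \dots, v_{s_n^j})$, so that by definition of $\phi_n$ the entry $v_{i_j}$ equals $v_{s_1^j}$, the first coordinate of $o_j$. The key is to understand the relationship between $o_j$ and $o_{j+k}$. Since $o_{j+1} = \sigma_{j+1} \cdot o_j$, $o_{j+2} = \sigma_{j+2} \cdot o_{j+1}$, and so on, iterating the action gives $o_{j+k} = \sigma_{j+k} \cdot (\sigma_{j+k-1} \cdot (\cdots (\sigma_{j+1} \cdot o_j)))$. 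Using the convention $\sigma \tau = \tau \circ \sigma$ and the fact that Definition \ref{actiondef} gives a genuine group action, this composite equals $(\sigma_{j+1}\sigma_{j+2}\cdots\sigma_{j+k}) \cdot o_j$. I would verify this reduction carefully, since getting the order of composition right (and consistent with the paper's nonstandard convention) is exactly where a sign/inversion error could creep in — this is the step I expect to be the main obstacle, though it is conceptually routine.

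Once we have $o_{j+k} = \tau \cdot o_j$ where $\tau := \sigma_{j+1}\sigma_{j+2}\cdots\sigma_{j+k}$, I would invoke the formula for the action from Definition \ref{actiondef}: if $o_j = (v_{s_1}, \dots, v_{s_n})$, then $\tau \cdot o_j = (v_{s_{\tau^{-1}(1)}}, \dots, v_{s_{\tau^{-1}(n)}})$. Reading off the first coordinate, the first entry of $o_{j+k}$ is $v_{s_{\tau^{-1}(1)}}$, whereas the first entry of $o_j$ is $v_{s_1}$. Now $v_{i_j} = v_{i_{j+k}}$ means precisely $v_{s_1} = v_{s_{\tau^{-1}(1)}}$, i.e. the first coordinate of $o_j$ matches the $\tau^{-1}(1)$-th coordinate of $o_j$. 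Because $o_j \in D_n$ has all distinct coordinates, this equality of entries holds if and only if the indices coincide: $1 = \tau^{-1}(1)$, which is equivalent to $\tau(1) = 1$, i.e. $\sigma_{j+1}\sigma_{j+2}\cdots\sigma_{j+k}(1) = 1$. This gives both directions of the biconditional at once.

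The only loose ends to nail down are: first, that each $o_\ell$ genuinely lies in $D_n$ (all coordinates distinct) so that the "distinct entries $\Leftrightarrow$ distinct indices" step is valid — this follows since $o_1 = (v_1,\dots,v_n) \in D_n$ and the $S_n$-action preserves $D_n$ (permuting distinct coordinates keeps them distinct); and second, the base case / boundary behavior when $j = 0$, where $o_0$ should be read as $o_1$ shifted appropriately — but actually the statement is indexed so that $v_{i_j}$ with $j \geq 1$ always corresponds to a genuine row, so I would just note that the argument above applies verbatim for any valid $j$ and $k$ with $j + k \leq N$. I would close by remarking that this lemma is what lets us detect row repetition within a single column directly from the order-generator: two rows $j$ and $j+k$ of the generated ordering agree in that column exactly when the product of the intervening instructions fixes $1$.
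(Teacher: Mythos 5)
Your proposal is correct and follows essentially the same route as the paper: iterate the action to get $o_{j+k} = (\sigma_{j+1}\cdots\sigma_{j+k})\cdot o_j$ (with the composition order consistent with the paper's convention $\sigma\tau = \tau\circ\sigma$), then compare first coordinates. The only difference is that you spell out the final step — using distinctness of the coordinates of $o_j \in D_n$ to pass from equality of entries to $\tau^{-1}(1)=1$, hence $\tau(1)=1$ — which the paper leaves implicit; this is a welcome clarification rather than a different argument.
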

\begin{proof}
Let $\vec{u'} = \begin{pmatrix}\sigma_1\\\vdots\\\sigma_N\end{pmatrix}$ and $\vec{u} = \begin{pmatrix}v_{i_1}\\\vdots\\v_{i_{N}}\end{pmatrix}$ above. Define $\vec{u''} = \varphi(\vec{u'}) = \begin{pmatrix}o_1\\\vdots\\o_N\end{pmatrix}$. By Definition \ref{phidef}, we have that
$$o_{j+k} = \sigma_{j+k}\cdot o_{j+k-1} = (\sigma_{j+k-1}\sigma_{j+k})\cdot o_{j+k-2} = \cdots = (\sigma_{j+1}\cdots\sigma_{j+k})\cdot o_j,$$ and that $\pi_1(\vec{u''}) = \vec{u}$ meaning that the first coordinate of $o_j$ is $v_{i_j}$ and the first coordinate of $o_{j+k}$ is $v_{i_{j+k}}$, so that we can conclude $$v_{i_j} = v_{i_{j+k}}\Leftrightarrow \pi^1(o_j) = \pi^1((\sigma_{j+1}\cdots\sigma_{j+k})\cdot o_j)\Leftrightarrow \sigma_{j+1}\cdots\sigma_{j+k}(1) = 1$$
where $\pi^1(v_{s_1},\ldots,v_{s_n}) := v_{s_1}$.
\end{proof}

In order to rewrite the conditions for a consecutive radio labeling more explicitly for $O'$, we wish to keep track of when a run of instructions in a column of $O'$ maps $1$ to itself, as this corresponds with repetition in that column of $O$ by the previous lemma.

\begin{definition} Given an instruction set generator $\Delta_n^i$, define $\Lambda_s^i$ to be the set of all runs of $s$ instructions, starting at position $i$, that map $1$ to itself. That is,
$$\Lambda_s^i(\rho_1,\ldots,\rho_{i-1}) := \{\sigma_1\sigma_2\cdots\sigma_s\mid\sigma_k\in\Delta_n^{i+k-1}(\rho_1,\ldots,\rho_{i-1},\sigma_1,\ldots,\sigma_{k-1}),\sigma_1\cdots\sigma_s(1)=1\}.$$
In the presence of multiple $\Delta_{n_k}^i$, we will denote the $\Lambda_s$ corresponding to $\Delta_{n_k}^i$ by $\Lambda_{s,k}^i$.
\end{definition}

Hence, if one can characterize $\Lambda_s^i$ for a given $\Delta_n^i$, then one can restate the conditions for a consecutive radio labeling easily by combining Lemma \ref{replem} with Proposition \ref{rcmatham},

\begin{theorem}\label{LambdasGeneral}
$O'$ generates an ordering that induces a consecutive radio labeling of $G = K_{n_1}\square\cdots\square K_{n_t}$ if and only if there is no repetition in $O:=\Phi_G(O')$, and for all $i<N$ and for every $s<t$, at most $s-1$ values of $j$ result in runs, $\sigma_{i-s+1}^j\cdots \sigma_i^j$, contained in $\Lambda_{s,j}^{i-s+1}(\sigma_1^j,\ldots,\sigma_{i-s}^j)$.
\end{theorem}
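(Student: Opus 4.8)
The plan is to peel off the ``no repetition'' clause, which is word for word the same on both sides since $O := \Phi_G(O')$, and then translate the radio condition on $O$ into a statement about the generating matrix $O'$ by combining Proposition~\ref{rcmatham} with Lemma~\ref{replem}. By Proposition~\ref{rcmatham}, once $O$ has no row repetition, $O$ induces a consecutive radio labeling if and only if for all $1<i\le N$ and all $k<t$ the rows $v^i$ and $v^{i-k}$ of $O$ agree in at most $k-1$ coordinates (pairs of rows at distance at least $t$ impose no constraint, which is why the theorem only quantifies $s<t$). So the entire statement reduces to re-expressing ``the number of coordinates in which $v^i$ and $v^{i-k}$ agree'' purely in terms of $O'$.

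For the core step I would fix a pair of rows at distance $s:=k<t$, say rows $a:=i-k$ and $i$ of $O$ with $a\ge 1$. The rows $v^i$ and $v^a$ agree in coordinate $j$ exactly when the $j$th entries of those two rows of $O$ agree; but column $j$ of $O$ is $\phi_{n_j}(c_j)$, where $c_j$ is column $j$ of $O'$, whose entries I write $\sigma_1^j,\ldots,\sigma_N^j$. Applying Lemma~\ref{replem} to this column, comparing its rows $a$ and $i=a+s$, those two entries agree if and only if $\sigma_{a+1}^j\sigma_{a+2}^j\cdots\sigma_i^j(1)=1$, that is, if and only if the length-$s$ block $\sigma_{i-s+1}^j\cdots\sigma_i^j$ fixes $1$. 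Since $O'$ is a (weak) order-generator, the factors of this block are legitimate successive $\Delta_{n_j}$-choices, so the block belongs to $\Lambda_{s,j}^{i-s+1}(\sigma_1^j,\ldots,\sigma_{i-s}^j)$ precisely when it fixes $1$. Hence the coordinates shared by $v^i$ and $v^{i-s}$ are exactly the $j$ with $\sigma_{i-s+1}^j\cdots\sigma_i^j\in\Lambda_{s,j}^{i-s+1}(\sigma_1^j,\ldots,\sigma_{i-s}^j)$, and ``at most $s-1$ shared coordinates'' becomes ``at most $s-1$ values of $j$ give a run in the corresponding $\Lambda$''. Letting the pair $(s,i)$ range over exactly the pairs of rows constrained by Proposition~\ref{rcmatham} then reproduces the displayed condition; since every step in the chain is an equivalence, the two characterizations coincide, proving the ``if and only if''.

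I do not expect a genuine mathematical obstacle here, since the argument is a chain of equivalences assembled from facts already established ($\Phi_G$ and each $\phi_n$ are bijections, Proposition~\ref{rcmatham}, and Lemma~\ref{replem}). The two points needing care are both bookkeeping. First, one must line up the index ranges so that quantifying over all $i$ and all $s<t$ captures precisely the pairs $\{v^i,v^{i-s}\}$ that appear in Proposition~\ref{rcmatham}, and in particular check that runs which would start before the first row, or have length at least $t$, contribute nothing. Second, one must confirm that for a block $\sigma_{i-s+1}^j\cdots\sigma_i^j$ read off an actual column of $O'$, membership in $\Lambda_{s,j}^{i-s+1}(\cdot)$ is equivalent to the single condition ``fixes $1$'': this holds because the definition of a weak order-generator already forces those factors to be admissible $\Delta_{n_j}$-choices, which is the only other requirement in the definition of $\Lambda$.
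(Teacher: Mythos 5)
Your proposal is correct and follows exactly the route the paper intends: the theorem is stated as a direct consequence of combining Proposition~\ref{rcmatham} (shared coordinates between rows $v^{i-s}$ and $v^i$) with Lemma~\ref{replem} (a shared coordinate in column $j$ corresponds to the run $\sigma_{i-s+1}^j\cdots\sigma_i^j$ fixing $1$, hence lying in $\Lambda_{s,j}^{i-s+1}$ since admissibility of the factors is automatic for a weak order-generator). Your two bookkeeping caveats are the right ones to flag, and neither hides a real gap.
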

\begin{corollary}\label{Lambdas}
$O'$ generates an ordering that induces a consecutive radio labeling of $K_n^t$ if and only if there is no repetition in $O:=\Phi_G(O')$, and for all $i<n^t$ and for every $s<t$, at most $s-1$ values of $j$ result in runs, $\sigma_{i-s+1}^j\cdots \sigma_i^j$, contained in $\Lambda_s^{i-s+1}(\sigma_1^j,\ldots,\sigma_{i-s}^j)$.
\end{corollary}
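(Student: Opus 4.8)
The plan is to obtain Corollary \ref{Lambdas} as the special case of Theorem \ref{LambdasGeneral} in which all of the factors coincide, i.e. $n_1 = n_2 = \cdots = n_t = n$, so that $G = K_n\square\cdots\square K_n = K_n^t$ and $N = n^t$. Under this substitution every column of a weak order-generator $O'$ is governed by the same instruction set generator $\Delta_n^i$, so the column-indexed sets $\Lambda_{s,j}^{i-s+1}$ all collapse to a single family $\Lambda_s^{i-s+1}$; with this identification the requirement ``for all $i<N$ and every $s<t$, at most $s-1$ values of $j$ give a run $\sigma_{i-s+1}^j\cdots\sigma_i^j\in\Lambda_{s,j}^{i-s+1}$'' becomes verbatim the condition in the corollary, and the repetition clause is unchanged. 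The first step, then, is simply to record that the hypotheses $n,N\geq 3$ and the setup of Theorem \ref{LambdasGeneral} specialize without obstruction, and that the two statements literally agree once each $n_k$ is set equal to $n$.

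Should one prefer a self-contained derivation rather than a one-line appeal, I would retrace the route that produced Theorem \ref{LambdasGeneral}, now using Corollary \ref{rcmat} (the $K_n^t$ incarnation of Proposition \ref{rcmatham}) in its place. Corollary \ref{rcmat} says that $O=\Phi_G(O')$ induces a consecutive radio labeling exactly when $O$ has no row repetition and, for all $1<i\leq n^t$ and all $k<t$, the rows $v^i$ and $v^{i-k}$ agree in at most $k-1$ columns. Agreement in column $j$ means $v^i_j = v^{i-k}_j$; applying Lemma \ref{replem} to the $j$th column $c_j$ of $O'$, whose image $\phi_n(c_j)$ is the $j$th column of $O$, this holds if and only if $\sigma_{i-k+1}^j\sigma_{i-k+2}^j\cdots\sigma_i^j(1)=1$, equivalently the run $\sigma_{i-k+1}^j\cdots\sigma_i^j$ lies in $\Lambda_k^{i-k+1}(\sigma_1^j,\ldots,\sigma_{i-k}^j)$. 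Summing the count of such agreements over the $t$ columns and writing $s$ for $k$ translates ``agree in at most $k-1$ columns'' into ``at most $s-1$ values of $j$ produce a run in $\Lambda_s^{i-s+1}$,'' which is the assertion.

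The only thing requiring care is the index bookkeeping: lining up the pairs $(v^i,v^{i-k})$ with $k<t$ from Corollary \ref{rcmat} against the windows $\sigma_{i-s+1}^j,\ldots,\sigma_i^j$ with $s<t$ in the corollary, and invoking Lemma \ref{replem} with the offset that makes a run of $s$ instructions ending in row $i$ correspond to the row pair $(v^{i-s},v^i)$. I expect this---rather than any genuinely new argument---to be the main (and quite minor) obstacle, together with the trivial check that nothing in the proof of Theorem \ref{LambdasGeneral} secretly relied on the $n_i$ being distinct, so that the specialization to all-equal factors is legitimate. Since $\Phi_G$ restricted to order-generators is already a bijection onto orderings and $\phi_n$ is already known to be a columnwise bijection by Theorem \ref{corr}, no part of the correspondence itself needs to be re-established.
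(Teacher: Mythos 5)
Your proposal is correct and matches the paper exactly: the paper offers no separate proof of Corollary \ref{Lambdas}, treating it as the immediate specialization of Theorem \ref{LambdasGeneral} to $n_1=\cdots=n_t=n$ (so that $N=n^t$ and the column-indexed $\Lambda_{s,j}$ collapse to a single $\Lambda_s$), and the theorem itself is obtained, as you describe, by combining Proposition \ref{rcmatham} (equivalently Corollary \ref{rcmat}) with Lemma \ref{replem}. Your attention to the index alignment between the pair $(v^{i-s},v^i)$ and the run $\sigma_{i-s+1}^j\cdots\sigma_i^j$ is exactly the only point of care, and you handle it correctly.
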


\section{Examples and an Ordering of $K_3^4$}
We will now consider some simple examples of choices for $\Delta_n^i$ with characterizations of their corresponding $\Lambda_n^i$, and we will conclude with a valid ordering of $K_3^4$ which was previously not known to be radio graceful.\\
For the remainder of this section, if there is ever a place where any element of $\Delta_n$ can be used in a run, then that we will simply use $f$ to denote this. Also, for any $k$, let $\overline{f_k}$ denote any instruction other than $f_k$, i.e. the complement of $\{f_k\}$ in $\Delta_n$ (the relevant instruction set), and let $\overline{f_k^s}$ denote a run of $s$ elements in the complement of $\{f_k\}$ in $\Delta_n$. Lastly, let $f_{<k}$ denote any element in $\{f_2,\ldots,f_{k-1}\}$ and $f_{>k}$ denote any element in $\{f_{k+1},\ldots,f_n\}$.

\begin{example}
The simplest instruction set generator is the constant function
$$\Delta_n^i = \Delta_n = \{f_k = (1k)\mid 2\leq k\leq n\}.$$
In this case, we can determine $\Lambda_s^i$ recursively as the constant function
$$\Lambda_s^i = \{xf_k\overline{f_k^l}f_k\mid 2\leq k\leq n,l\geq 0,x\in\Lambda_{s-l-2}^i\}.$$
\end{example}

\begin{example}\label{LRU}
Another constant function example is
$$\Delta_n^i = \Delta_n = \{f_k = (12\cdots k)\mid 2\leq k\leq n\}$$
which I call the Least Recently Used (LRU) instruction set because each element of $\varphi(O')$ orders $V(K_n)$ by least recently used first (where they are 'used' in that column of $\Phi_n(O')$). In this case $\Lambda_s^i$ is a bit more complicated, but can once again be characterized recursively as the constant function
$$\Lambda_s^i = \{xff_{>2}f_{<3}^{n_3}f_{>3}f_{<4}^{n_4}f_{>4}\cdots f_{<k}^{n_k}f_k\mid 2\leq k\leq n, x\in\Lambda_{s-(k-1)-\sum_{i=3}^kn_i}^i\}.$$
\end{example}

\begin{example}\label{LTU}
Yet another example of a constant function example is
$$\Delta_n^i = \Delta_n = \{f_2 = (12),f_k = (12k)\mid 3\leq k\leq n\}.$$
This instruction set guarantees that the first two coordinates of each element of $\varphi(O')$ are the most recent two coordinates (which are guaranteed to be different by the constraint from the radio condition on orderings, as seen in $A_n$), and is the simplest instruction set that does so.\\
As usual, we can characterize $\Lambda_s^i$ recursively as the constant function
$$\Lambda_s^i = \{xff_2\vert x\in\Lambda_{s-2}^i\}\cup\{xff_k\overline{f_k^{s'}}f_k\vert 3\leq k\leq n,x\in\Lambda_{s-s'-3}^i\}.$$
\end{example}

\subsection{Order-Generators as a Generalization of Orderings}
This last example will demonstrate that the framework of instruction set generators and their associated order-generators is more general than using orderings. Consider the instruction set generator that only considers the most recent element,
$$\Delta_n^i(\sigma_1,\ldots,\sigma_{i-1}) = \Delta_n^i(\sigma_{i-1}) = \{f_{k'} = (1k'),f_k = (1kk')\mid 2\leq k\leq n, k\neq k'\}$$
where $k'$ is such that $\sigma_{i-1} = f_{k'}\in\Delta_n^{i-1}(\sigma_{i-2})$. If we use the naming convention of renaming $id = f_1$, and $f_k' = f_1$, doing this every other time if there are consecutive instructions with the same subscript, then we get that $\phi_n(\begin{pmatrix}
v_{i_1}\\\vdots\\v_{i_N}
\end{pmatrix}) = \begin{pmatrix}
f_{i_1}\\\vdots\\f_{i_N}
\end{pmatrix}$. In particular,
$$\Lambda_s^i(\sigma_1,\ldots,\sigma_{i-1}) = \Lambda_s^i(\sigma_{i-1}) = \{f^{s-1}f_k\mid k\text{ is such that }\sigma_{i-1} = f_k\}.$$
Therefore, if we look at order generators as just another matrix with integer subscripts that must satisfy some conditions, then this example shows that we can recover our original matrices (orderings) and their constraints in this new framework.

\subsection{Labeling $K_3^4$}
Fix $\Delta_3^i$ for all columns to be the constant function $\Delta_3^i=\Delta_3 = \{f_2 = (12), f_3 = (123)\}$ (this can be thought of as using $\Delta_n$ from Example \ref{LRU} or from Example \ref{LTU}). As we are only concerned with $\Lambda_s$ for $s<4$ here, we need only consider $\Lambda_2 = \{f_2^2,f_3f_2\}$ and $\Lambda_3 = \{f_2f_3^2,f_3^3\}$. Thus, we can have at most one $f_2$ instruction in every row (after the second), and at most two $f_3$ instructions in the same column as an $f_3$ in the row above. This is because any $f_2$ will necessarily cause either an $f_2^2$ or $f_3f_2$, and any two consecutive $f_3$ instructions will necessarily cause either an $f_2f_3^2$ or an $f_3^3$. Since we cannot have $3$ columns with consecutive $f_3$ instructions, we must have at least (which is equivalent to having exactly) one $f_2$ instruction in every row, and that $f_2$ must be in a different coordinate than the row above it. This is because having four $f_3$ in any row would necessarily create a problem in both the next and previous rows, and we cannot place two consecutive $f_2$ in the same column as this would also result in three runs in $\Lambda_3$. (Note that the result of Corollary \ref{BoundaryForced} is seen to come out here).\\

In this manner, as there are now only four states for every row of $O'$ (corresponding to the position of $f_2$), the problem of generating a consecutive radio labeling for $K_3^4$ has been reduced to choosing a coordinate in which to place $f_2$ in each row, with the restriction that the same coordinate cannot be chosen twice in a row, and that there can be no repetition.\\

It was previously unknown whether $K_3^4$ is radio graceful, but using the reduction presented, a backtrack searching algorithm (that was used to avoid repetition) found many orderings that induce consecutive radio labelings. Here is one of them:
$$O=\begin{bmatrix}
v_1&v_1&v_1&v_1\\
v_2&v_2&v_2&v_2\\
v_1&v_3&v_3&v_3\\
v_3&v_1&v_1&v_2\\
v_1&v_2&v_2&v_1\\
v_2&v_1&v_3&v_3\\
v_1&v_3&v_1&v_2\\
v_3&v_2&v_2&v_3\\
v_2&v_1&v_1&v_1\\
v_1&v_2&v_3&v_2\\
v_2&v_3&v_2&v_3\\
v_3&v_2&v_1&v_1\\
v_1&v_1&v_3&v_3\\
v_3&v_3&v_2&v_2\\
v_2&v_2&v_1&v_3\\
v_3&v_1&v_3&v_1\\
v_1&v_3&v_2&v_3
\end{bmatrix}
\begin{bmatrix}
v_3&v_2&v_1&v_2\\
v_2&v_3&v_3&v_1\\
v_1&v_1&v_2&v_2\\
v_3&v_2&v_3&v_3\\
v_1&v_3&v_1&v_1\\
v_2&v_1&v_3&v_2\\
v_1&v_2&v_2&v_3\\
v_3&v_3&v_3&v_1\\
v_2&v_1&v_1&v_3\\
v_3&v_2&v_2&v_2\\
v_1&v_1&v_3&v_1\\
v_3&v_3&v_1&v_3\\
v_2&v_1&v_2&v_2\\
v_1&v_2&v_3&v_3\\
v_3&v_3&v_2&v_1\\
v_2&v_2&v_1&v_2
\end{bmatrix}
\begin{bmatrix}
v_1&v_1&v_2&v_3\\
v_3&v_2&v_3&v_1\\
v_2&v_3&v_2&v_2\\
v_1&v_2&v_1&v_3\\
v_3&v_1&v_3&v_2\\
v_1&v_3&v_2&v_1\\
v_2&v_2&v_3&v_3\\
v_3&v_1&v_1&v_1\\
v_1&v_3&v_3&v_2\\
v_2&v_2&v_2&v_1\\
v_3&v_1&v_3&v_3\\
v_2&v_3&v_1&v_2\\
v_1&v_2&v_3&v_1\\
v_2&v_1&v_2&v_3\\
v_3&v_3&v_1&v_1\\
v_1&v_2&v_2&v_2
\end{bmatrix}
\begin{bmatrix}
v_2&v_1&v_3&v_1\\
v_3&v_3&v_2&v_3\\
v_1&v_2&v_1&v_1\\
v_2&v_3&v_3&v_2\\
v_3&v_1&v_2&v_1\\
v_1&v_3&v_1&v_3\\
v_3&v_2&v_3&v_2\\
v_2&v_3&v_2&v_1\\
v_1&v_1&v_1&v_2\\
v_3&v_3&v_3&v_3\\
v_2&v_2&v_1&v_1\\
v_1&v_3&v_2&v_2\\
v_3&v_1&v_1&v_3\\
v_2&v_2&v_3&v_2\\
v_1&v_1&v_2&v_1\\
v_3&v_3&v_1&v_2
\end{bmatrix}
\begin{bmatrix}
v_2&v_2&v_2&v_3\\
v_1&v_3&v_3&v_1\\
v_2&v_1&v_1&v_2\\
v_3&v_2&v_2&v_1\\
v_2&v_3&v_3&v_3\\
v_1&v_2&v_1&v_2\\
v_3&v_1&v_2&v_3\\
v_2&v_3&v_1&v_1\\
v_1&v_1&v_3&v_2\\
v_3&v_2&v_1&v_3\\
v_2&v_1&v_2&v_1\\
v_3&v_3&v_3&v_2\\
v_1&v_1&v_1&v_3\\
v_2&v_2&v_3&v_1\\
v_3&v_1&v_2&v_2\\
v_2&v_3&v_1&v_3
\end{bmatrix}$$

\section{Future Work}
This paper has constructed a framework that can be used to generate orderings for Hamming graphs in such a way that they must satisfy the radio condition. However, there has yet to be found any reasonable method within this framework of weak generators that ensures avoiding repetition i.e. an ordering generated in this way will only satisfy the conditions for a (consecutive) radio labeling up to labeling vertices multiple times. If a clever constraint on $O'$ were found that entailed repetition, then this would provide us with a method for generating radio labelings relatively quickly by simply searching for $O'$ satisfying the constraints of Theorem \ref{LambdasGeneral} and repetition. Additionally, if there was a construction for a potential radio labeling of a Hamming graph, one need only prove that this construction satisfies Theorem \ref{LambdasGeneral} and does not create repetition.

\begin{question} Is there an efficient process for generating $O'$ without creating repetition?
\end{question}

This paper has also taken the first step toward filling the gap in Figure \ref{Fig:gap} but it is still unknown if there is a tighter upper bound than the one presented as Corollary \ref{upperbound}.

\begin{question} Is Corollary \ref{upperbound} a tight upper bound?
\end{question}

\begin{question} Is Theorem \ref{upperboundgeneral} a tight upper bound?
\end{question}

\begin{question} In light of both Corollary \ref{BoundaryForced} and Corollary \ref{Lambdas}, as well as the process illustrated for labeling $K_3^4$, is there a method for generating valid $O'$ for $K_n^t$ where $t=\frac{n(n^2-1)}{6}$?
\end{question}

\begin{question} Are there choices of $\Delta_n^i$ that yield $\Lambda_s^i$ with any significant or useful algebraic structure?
\end{question}

\begin{question} Is there a method to algebraically study consecutive radio labelings of Hamming graphs?
\end{question}

\bibliographystyle{amsplain}
\bibliography{radio}

\end{document}